\documentclass[12pt,reqno]{amsart}



\textwidth15.6cm
\textheight22.8cm
\hoffset-2truecm
\voffset-1truecm

\usepackage{amsfonts}
\usepackage{amsmath}
\usepackage{amssymb}
\usepackage[colorlinks = True, linkcolor=blue, urlcolor=blue]{hyperref}
\usepackage{thmtools}
\usepackage{amsthm}
\usepackage{verbatim}
\usepackage{ulem}
\usepackage{cancel}
\usepackage{tikz, tikz-cd}
\usetikzlibrary{calc}
\usepackage{ytableau}
\usepackage{subcaption}

\theoremstyle{plain}
\newtheorem{theorem}{Theorem}[section]
\newtheorem{proposition}[theorem]{Proposition}
\newtheorem{lemma}[theorem]{Lemma}
\newtheorem{corollary}[theorem]{Corollary}

\theoremstyle{definition}

\theoremstyle{remark}

\theoremstyle{example}

\numberwithin{equation}{section}

\usepackage[]{cite}

\newcommand\QQ{\mathbb{Q}}

\newcommand\ZZ{\mathbb{Z}}

\newcommand\Par{\mathrm{Par}}

\newcommand\up{\mathrm{up}}

\newcommand\inv{\mathrm{inv}}

\newcommand\arm{\mathrm{arm}}
\newcommand\leg{\mathrm{leg}}
\newcommand{\coleg}{\mathrm{coleg}}
\newcommand{\coarm}{\mathrm{coarm}}

\newcommand{\rev}{\mathrm{rev}}

\newcommand{\SYT}{\mathrm{SYT}}

\newcommand{\wt}{\mathrm{wt}}
\newcommand{\Area}{\mathrm{Area}}

\newcommand{\DD}{\mathbb{D}}

\newcommand{\NEpath}[4]{
	\fill[white!25]  (#1) rectangle +(#2,#3);
	\fill[fill=white]
	(#1)
	\foreach \dir in {#4}{
		\ifnum\dir=0
		-- ++(1,0)
		\else
		-- ++(0,1)
		\fi
	} |- (#1);
	\draw[help lines] (#1) grid +(#2,#3);
	\draw[dashed] (#1) -- +(#3,#3);
	\foreach \x in {1,...,#2} {
		\node[below,gray] at ($(#1) + (\x-0.5, -0.3)$) {\x};
	}
	
	\foreach \y in {1,...,#3} {
		\node[left,gray] at ($(#1) + (-0.3, \y-0.5)$) {\y};
	}
	\coordinate (prev) at (#1);
	\foreach \dir in {#4}{
		\ifnum\dir=0
		\coordinate (dep) at (1,0);
		\else
		\coordinate (dep) at (0,1);
		\fi
		\draw[line width=2pt,-stealth] (prev) -- ++(dep) coordinate (prev);
	};
}

\title[$q$-rook numbers]{A tableaux formula for $q$-rook numbers}
\author[Basu]{Tirtharaj Basu}
\address{The Institute of Mathematical Sciences, A CI of Homi Bhabha National Institute, Chennai 600113, India}
\email{tirtharajb@imsc.res.in}
\author[Bhattacharya]{Aritra Bhattacharya}
\address{Beijing International Center for Mathematical Research, Peking University, Beijing 100871, China}
\email{matharitra@gmail.com}

\keywords{$q$-rook numbers, unicellular LLT functions, $q$-Whittaker functions, symmetric functions, Dyck paths}
\subjclass{Primary: 05E05; Secondary: 05A10, 05A30}
\date{\today}
%
%
%
%
%
\begin{document}
	
	\maketitle

\begin{abstract}
	We provide a formula for the Garsia-Remmel $q$-rook numbers as a sum over standard Young tableaux. We connect our formula with the coefficients in $q$-Whittaker expansion of unicellular LLT functions. 
\end{abstract}

\section{Introduction}

The Garsia-Remmel $q$-rook numbers $R_k(\lambda;q) \in \ZZ_{\geq 0}[q]$ for $k \in \ZZ_{\geq 0}$ counts the number of ways to place $k$ non-attacking rooks on a Ferrers board of a partition $\lambda$ with certain $q$ weight. We provide a tableaux formula for $R_k(\lambda;q)$, which we describe now.

Let $\pi$ be a Dyck path of semilength $n$ and let $\lambda(\pi)$ denote the partition formed by the shape above $\pi$ inside the $n \times n$ grid. For $1\leq i<j \leq n$ let $i<_\pi j$ if $(i,j) \notin \Area(\pi)$, i.e, the cell $(i,j)$ is above the Dyck path $\pi$. The set $\SYT^\pi_\mu$ is the set of standard Young tableaux of shape $\mu$ such that if  $i$ is above $j$ in the same column then $i <_\pi j$. Let \begin{align*}
	\gamma(T) = \# \{(b,c) \in \mu\times \mu\,|\, \coleg(b)>\coleg(c) \hbox{ and } (T(c),T(b)) \in \Area(\pi) \}
\end{align*} be the number of pairs of boxes $(b,c)$ such that $c$ is in some row above $b$ in the Young diagram (English notation) and $T(c) < T(b)$ but $T(c) \nless_\pi T(b)$. 

We can now state our main result. 
 \begin{theorem}\label{th:mainthmintrostate}
	Let $n \in \ZZ_{>0}$, $\lambda \in \Par$ and $\pi \in \DD_n$ is such that $\lambda(\pi) = \lambda$. Then for $k \in \ZZ_{\geq 0},$ \begin{equation}\label{eq:mainthmeq}
		R_k(\lambda;q) = \sum_{\substack{\mu \vdash n \\ \mu_1 = n-k}} q^{n(\mu')-\# \Area(\pi)} \sum_{T \in \SYT^\pi_\mu}
		q^{\gamma(T)} \prod_{\substack{b \in \mu \\ \coleg(b)>0}} [\arm_{<_\pi T(b)}(\up(b))+1]_q.
	\end{equation} 
\end{theorem}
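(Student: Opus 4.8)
The plan is to deduce \eqref{eq:mainthmeq} from the $q$-Whittaker expansion of the unicellular LLT function $\mathrm{LLT}_\pi(x;q)$ together with the Garsia--Remmel factorization theorem. Write $\phi_m(x;q)=[x]_q[x-1]_q\cdots[x-m+1]_q$ for the $q$-falling factorial. In the normalization of the $q$-rook numbers used above, the Garsia--Remmel theorem states
\begin{equation*}
	\sum_{k\ge0}R_k(\lambda;q)\,\phi_{n-k}(x;q)\;=\;\prod_{i=1}^{n}[x+b_i-i+1]_q,
\end{equation*}
where $b_1\le\cdots\le b_n$ are the column heights of the board $\lambda$ in the $n\times n$ grid. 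Since the $\phi_m$ are linearly independent over $\QQ(q)$, it suffices to exhibit one linear functional $\Phi$ on symmetric functions with
\begin{equation*}
	\Phi(\mathrm{LLT}_\pi)=q^{\#\Area(\pi)}\prod_{i=1}^{n}[x+b_i-i+1]_q
	\qquad\text{and}\qquad
	\Phi(W_\mu)=q^{n(\mu')}\,\phi_{\mu_1}(x;q):
\end{equation*}
applying $\Phi$ to $\mathrm{LLT}_\pi=\sum_\mu c_{\pi,\mu}(q)\,W_\mu$, where $c_{\pi,\mu}(q)$ is the inner sum $\sum_{T\in\SYT^\pi_\mu}q^{\gamma(T)}\prod_{b\in\mu,\,\coleg(b)>0}[\arm_{<_\pi T(b)}(\up(b))+1]_q$, and comparing coefficients of $\phi_{n-k}$ yields $\sum_{\mu_1=n-k}q^{n(\mu')}c_{\pi,\mu}(q)=q^{\#\Area(\pi)}R_k(\lambda;q)$, which is \eqref{eq:mainthmeq}.

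The first step is to identify $c_{\pi,\mu}(q)$ with the coefficient of $W_\mu$ in $\mathrm{LLT}_\pi(x;q)$. One builds $\mu$ a row at a time, largest row on top: each added row corresponds to a $<_\pi$-compatible horizontal strip, the product $\prod[\arm_{<_\pi\cdot}+1]_q$ is the accumulated $q$-Whittaker Pieri coefficient, and $q^{\gamma(T)}$ accounts for the discrepancy between the LLT inversion statistic $\inv_\pi$ and the statistics generated along this branching; the $<_\pi$-decoration in the definition of $\SYT^\pi_\mu$ is exactly the constraint the branching rule imposes. If the $q$-Whittaker expansion of unicellular LLT functions is proved in an earlier section one simply quotes it; otherwise it is established here by this branching argument.

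The second step produces $\Phi$: take it to be a plethystic specialization replacing the alphabet by a suitable virtual progression in $q$ and a formal parameter $x$ (such as the principal specialization $X\mapsto\tfrac{1-q^{x}}{1-q}$, or a close variant). On a $q$-Whittaker function such an evaluation is governed by the first row only --- at $t=0$ the cells outside the first row drop out of the relevant hook--content product --- so $\Phi(W_\mu)$ depends on $\mu$ only through $\mu_1$, and the known evaluation formula (after the chosen normalization of $W_\mu$) gives $\Phi(W_\mu)=q^{n(\mu')}\phi_{\mu_1}(x;q)$, with $q^{n(\mu')}$ exactly the power in \eqref{eq:mainthmeq}. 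The third step --- the main obstacle --- is to compute $\Phi(\mathrm{LLT}_\pi)$ independently from the coloring expansion $\mathrm{LLT}_\pi(x;q)=\sum_{\kappa\colon\{1,\dots,n\}\to\ZZ_{>0}}q^{\inv_\pi(\kappa)}\prod_i x_{\kappa(i)}$ and to match it with $q^{\#\Area(\pi)}\prod_i[x+b_i-i+1]_q$: the mechanism is a weight-preserving correspondence between the colorings surviving the specialization and the non-attacking rook placements on the board $\{(i,j):i<_\pi j\}=\lambda$, under which $\inv_\pi(\kappa)$ turns into the Garsia--Remmel weight of the placement plus the fixed shift $\#\Area(\pi)$ contributed by the upper-triangular cells lying below $\pi$.

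Given the two identities for $\Phi$, the theorem follows by matching coefficients of the linearly independent $\phi_m$. The delicate points are all in the third step: choosing $\Phi$ so that it simultaneously realises the correct $q$-Whittaker values (with the $q^{n(\mu')}$ normalization) and the Garsia--Remmel product out of the coloring sum, and then carrying out the statistic-tracking bijection, in particular checking that $\inv_\pi$ converts cell-by-cell into the rook statistic and that $\#\Area(\pi)$ appears correctly. A more pedestrian but safer route, should this specialization identity resist, is to induct on $|\lambda|$ using the Garsia--Remmel recursion for $R_k(\lambda;q)$ obtained by deleting the rightmost column of $\lambda$, matched against the effect of the corresponding local change of $\pi$ on the data $\big(\SYT^\pi_\mu,\ \gamma,\ \prod[\arm_{<_\pi\cdot}+1]_q,\ n(\mu')-\#\Area(\pi)\big)$, with base case $\lambda=\emptyset$ --- where only $\mu=(n)$ contributes, $n((n)')=\binom n2=\#\Area(\pi)$, and both sides reduce to $\delta_{k,0}$.
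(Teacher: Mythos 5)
Your route is genuinely different from the paper's (which never uses the $W$-expansion to prove the theorem: it verifies directly that the tableau sum obeys the Garsia--Remmel recursion \eqref{eq:GarsiaRemmelrec} under removal of the first row of $\lambda$, i.e.\ of the last $NE$ of $\pi$, via a weight-ratio lemma and a telescoping sum over the row in which the entry $n$ is inserted), but as written it has a genuine gap at its crux: neither of the two displayed requirements on $\Phi$ is proved, and for the candidate you name, the principal specialization $X\mapsto\frac{1-q^x}{1-q}$, both are false. Take $n=2$, $\pi=N^2E^2$, so $\chi_\pi(q)=W_{(2)}(q)=h_2+q\,e_2$, $\lambda(\pi)=\emptyset$, $\#\Area(\pi)=1$. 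Specializing to the two variables $1,q$ gives $1+q+2q^2$, whereas your claims require $q^{n((2)')}\phi_2(2;q)=q[2]_q[1]_q=q+q^2$ and $q^{\#\Area(\pi)}[x]_q[x-1]_q\big|_{x=2}=q+q^2$. The discrepancy is structural, not a typo: the Garsia--Remmel-type product evaluation pertains to the chromatic function $X_\pi$ (proper colorings), while $\chi_\pi$ sums over \emph{all} colorings; they differ by the plethysm \eqref{eq:chromaticLLTplethysmrel}, so the correct functional must involve a $(q-1)$-twisted alphabet (essentially $X\mapsto q^x-1$), and moreover the paper's normalization \eqref{eq:cpimudef} carries the factor $(1-q)^{n-\mu_1}$ that your expansion $\chi_\pi=\sum_\mu c_{\pi,\mu}W_\mu$ omits, so the coefficient comparison is off by $(1-q)^k$ in each degree unless this is tracked. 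Until a specific $\Phi$ is exhibited and both evaluations are actually computed with these corrections (your own text flags this third step as "the main obstacle" and leaves it open), the argument does not close. In addition, your first step --- that the inner sum $\sum_{T\in\SYT^\pi_\mu}q^{\gamma(T)}\prod[\arm_{<_\pi T(b)}(\up(b))+1]_q$ equals $c_{\pi,\mu}(q)$ --- is the full strength of the theorem of Garbali--Gunna et al.\ quoted in \autoref{th:equalitywithVienna}, and the translation of their statistics $d(T,i)$, $L(T,i)$, $m(T,i)$ into $\arm_{<_\pi}$ and $\gamma$ is itself a nontrivial verification; a sketched "branching argument" does not substitute for it.

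That said, the strategy is salvageable: the identity $R_k(\lambda;q)=\sum_{\mu_1=n-k}q^{n(\mu')-\#\Area(\pi)}c_{\pi,\mu}(q)$ (\autoref{prop:Rkwithcpimu}) does admit an independent specialization-style proof (the paper cites such a proof in the recent literature), and combined with \autoref{th:equalitywithVienna} this would yield \eqref{eq:mainthmeq}; but then the burden is exactly the two evaluation lemmas you postponed. Your "pedestrian fallback" is close in spirit to what the paper actually does, except the paper deletes the first row of $\lambda$ (not the rightmost column) so as to stay within Dyck paths, and the substance there is the comparison $\wt(T^{+n,i};q)/\wt(T;q)$ together with the telescoping identity producing $[\lambda_1-k+1]_q$ --- a computation your proposal does not attempt.
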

A more detailed explanation of all the notations used above is given in  \S\ref{sec:tabformulanots}.
 
In fact, for $n = N \geq \lambda_1 + \lambda'_1$, the above formula only runs over the partition $(N-k,k)$ and so (\autoref{prop:Rkabelian}) $$ R_k(\lambda;q) = q^{|\lambda|-(N-k)k} \sum_{T \in \SYT^\pi_{(N-k,k)}} q^{\gamma(T)} \prod_{\substack{b \in \mu \\ \coleg(b)>0}} [\arm_{<_\pi T(b)}(\up(b))+1]_q. $$

In \cite{Abreu_Nigro_csfmodular}, \cite{CMP_rook} and \cite{Ram_Schlosser_qWandrook}  relations between $q$-rook numbers and symmetric functions appearing in the Macdonald functions universe are explored. We use the formula above to make yet another such connection.
By using result of \cite{GMRWW_Macexpansion}, we can relate our formula to the coefficients of unicellular LLT functions $\chi_\pi(q)$ for a Dyck path $\pi$ in the basis of $q$-Whittaker functions $(W_\lambda(q):\lambda \in \Par)$. For $\pi \in \DD_n$ and partitions $\mu \vdash n$, let $c_{\pi,\mu}(q) \in \QQ(q)$ be defined by \begin{align*}
	\chi_\pi(q) = \sum_{\mu \vdash n} (1-q)^{n-\mu_1} c_{\pi,\mu}(q) W_\mu(q).
\end{align*} 
Then (\autoref{prop:Rkwithcpimu})
\begin{align*}
	\sum_{\substack{\mu \vdash n \\ \mu_1 = n-k}} q^{n(\mu')-\# \Area(\pi)} c_{\pi,\mu}(q) = R_k(\lambda(\pi);q).
\end{align*}
The recent paper \cite{kim2025halllittlewoodexpansionschromaticquasisymmetric} obtains another proof of the above identity.

Based on the formula for $e$-expansion for unicellular LLTs obtained in \cite{Abreu_Nigro_forests}, we also connect the last $q$-rook number of certain partitions to the $e$-expansion coefficients in \autoref{prop:enrook}.

\section{Notations}

\subsection{}We denote by $[n]$ the integer interval $\{1,\ldots,n\}$ for $n \in \ZZ_{>0}$. This is not to be confused with the $q$-numbers $[n]_q$ which will always have a $q$ in the subscript, and also should be clear from the context. Unless otherwise mentioned, $n$ is some positive integer in this paper.

\subsection{q-numbers} For $n,k \in \ZZ_{\geq 0}$ with $0 \leq k \leq n$, \begin{align*}
	[n]_q = 1+\ldots+q^{n-1}\quad \hbox{ and } \quad [n]_q! = [n]_q\ldots[1]_q.
\end{align*}
Let $(a;q)_j = (1-a)(1-qa)\ldots(1-q^{j-1}a)$, for $j \in \ZZ_{\geq 0}$. Then \begin{equation}\label{eq:qbindef}
	{n\brack k}_q = \dfrac{(q;q)_{n}}{(q;q)_k(q;q)_{n-k}} = \dfrac{[n]_q!}{[k]_q! [n-k]_q!}.
\end{equation} 
Then \begin{align}\label{eq:qbinomialreverse}
	{j \brack k}_{q^{-1}} &= \dfrac{(q^{-1};q^{-1})_j}{(q^{-1};q^{-1})_k (q^{-1};q^{-1})_{j-k}} 
	=q^{\binom{k}{2}+\binom{j-k}{2}-\binom{j}{2}} {j \brack k}_q = q^{-k(j-k)} {j \brack k}_q.
\end{align}

\subsection{Dyck paths} A Dyck path of semilength $n$ is a lattice path from $(0,0)$ to $(n,n)$ consisting of unit length north steps $N$ and unit length east steps $E$ such that the path always stays weakly above the diagonal $x=y$. We will write a Dyck path as a word in $N$ and $E$. We write the cell co-ordinate of each box in the $n\times n$ grid from $(0,0)$ to $(n,n)$ inside $\ZZ_{\geq 0} \times \ZZ_{\geq 0}$ as the co-ordinate of its north-east corner. $\Area(\pi)$ is the set of cells below $\pi$ above the diagonal. The set of Dyck paths of semilength $n$ is denoted by $\DD_n$, and $\DD = \cup_{n \in \ZZ_{\geq 0}} \DD_n$ be the set of all Dyck paths of any semilength. For $\pi \in \DD$, let $|\pi|$ denote its semilength. \autoref{fig:pi} gives an example.

\begin{figure}[h]
	\begin{minipage}{0.45 \textwidth}
		\begin{tikzpicture}
			\NEpath{0,0}{6}{6}{1,1,1,0,0,0,1,1,0,1,0,0};
		\end{tikzpicture}
	\end{minipage}
	\hfill
	\begin{minipage}{0.45 \textwidth}
		\begin{align*}
			&\pi = N^3E^3N^2ENE^2 \in \DD_6,
			\\
			&\Area(\pi) = \{(1,2),(1,3),(2,3),(4,5),(5,6)\},
			\\
			&\lambda(\pi) = (4,3,3). 	
		\end{align*}
	\end{minipage}
		\caption{Example of a Dyck path}
	\label{fig:pi}
\end{figure}

\subsection{Dyck path to poset}

For a Dyck path $\pi$ of semilength $n$, define a poset on $[n]$ where the strict inequalities are given by $i<_\pi j$ if $i<j$ and $(i,j) \notin \Area(\pi)$, i.e, $(i,j)$ is above the Dyck path.

For the Dyck path in \autoref{fig:pi}, $1 <_\pi 4 <_\pi 6$, $1 <_\pi 5$, $2 <_\pi 4$, $2 <_\pi 5$, and $3<_\pi 4$, $3 <_\pi 5$. Note that if $1 \leq i <_\pi j \leq k \leq n$ implies that $i<_\pi k$.

\subsection{Partitions}The set of all integer partitions is denoted by $\Par$. We think of the Young diagram in the English convention, as in Macdonald's book \cite{MacMainBook}, and follow Macdonald's definition and convention throughout the paper concerning partitions. In particular, for a partition $\lambda$, its conjugate is denoted $\lambda'$, the weighted size $$ n(\lambda') = \sum_{i \geq 1} \binom{\lambda_i}{2}.$$  The $\arm, \leg, \coarm, \coleg$ of a box in the Young diagram is denoted $a,l,a',l'$ respectively in \cite{MacMainBook}. In particular, the cell co-ordinates of a box $b$ equals $(\coleg(b)+1,\coarm(b)+1)$ and $$\arm(b)+\coarm(b)+1 = \lambda_{\coleg(b)+1}, \qquad \hbox{ and } \qquad \leg(b)+\coleg(b)+1 = \lambda'_{\coarm(b)+1}.$$ 

For a box $b \in \lambda$, we denote by $\up(b)$ the box directly above it in the previous row, if $\coleg(b)>0$. So, $$ \coarm(\up(b)) = \coarm(b) \qquad \hbox{ and } \qquad \coleg(\up(b))+1 = \coleg(b). $$ 

Let $\lambda \in \Par$. Denote by $\lambda \pm \varepsilon_i = (\lambda_1,\ldots,\lambda_{i-1},\lambda_i \pm 1, \lambda_{i+1},\ldots)$ the composition obtained by adding or removing a box in the $i$th row of $\lambda$. If $\lambda_{i-1}>\lambda_i$ then $\lambda+\varepsilon_i \in \Par$ and if $\lambda_i > \lambda_{i+1}$ then $\lambda- \varepsilon_i \in \Par$.

\subsection{Dyck path to partition}

The boxes in $n\times n$ grid above $\pi \in \DD_n$ is the shape of a partition, read row-by-row from top to bottom, which we denote by $\lambda(\pi)$. It is contained inside the staircase shape partition $\rho_n = (n-1,\ldots,0)$. \autoref{fig:pi} gives an example.

Then $i<_\pi j$ if $j > n - \lambda(\pi)'_i$.

\subsection{$\pi$-tableaux}
The set of standard Young tableaux of some partition shape $\lambda$ will be denoted by $\SYT_\lambda$. This is the set of fillings $T : \lambda \to [|\lambda|]$ such that the value increases left-to-right along a row and top-to-bottom along a column.

 For $\pi \in \DD_n$ and $\mu \vdash n$ let \begin{equation}
	\SYT_\mu^\pi = \{T \in \SYT_\mu\,|\, T(\up(b)) <_\pi T(b) \ \hbox{ for all } b \in \mu \hbox{ with } \coleg(b) >0 \},
\end{equation} i.e, $\SYT^\pi_\mu$ is the set of standard Young tableaux of shape $\mu$ such that if the number $i$ appears above $j$ in the same column then $i <_\pi j$.

For the path in \autoref{fig:pi}, \begin{equation}\label{eq:SYTpieg}
	\ytableausetup{centertableaux, nosmalltableaux} \SYT^\pi_{(3,3)} = \bigg\{\,\ytableaushort{123,456}\,\bigg\}, \quad \SYT^\pi_{(3,2,1)} =\bigg\{\,\ytableaushort{123,45,6}\,\bigg\}, \quad \hbox{ and } \quad \SYT^\pi_{(3,1^3)} = \emptyset.
\end{equation} 

\subsection{Dyck path to Hessenberg functions}\label{sec:DycktoHess}
Let $n \in \ZZ_{>0}$. A Hessenberg function $\mathbf{m}: [n] \to [n]$ is a non-decreasing function such that $\mathbf{m}(i) \geq i$ for every $i \in [n]$. For a Dyck path $\pi \in \DD_n$, define a Hessenberg function $\mathbf{m}(\pi) : [n] \to [n]$ by $$ \mathbf{m}(\pi)(i) = n-\lambda(\pi)_i, \qquad \hbox{ for } i \in [n],$$ i.e, the value of $i$ is the distance between the $(n-i)$th $E$ step and the line $y=n$, or in other words, $\mathbf{m}(\pi)$ in reverse is the complementary partition of $\lambda(\pi)$ in the $n\times n$ square. 

For the path in \autoref{fig:pi}, $\mathbf{m}(\pi) = (2,3,3,6,6,6)$, where the $i$th component denotes the value at $i$.

\section{$q$ rook numbers}

In this section, we recall the definition and recursion of $q$-rook numbers as defined by Garsia and Remmel in \cite{Garsia_Remmel_rook}. We then provide a proof of our main result, a standard tableaux formula for the $q$-rook numbers.

\subsection{Definition of $q$-rook numbers}Given a partition $\lambda$ and $k \in \ZZ_{\geq 0}$, a \textit{rook placement} with $k$ rooks on $\lambda$ is the the number of ways to select $k$ cells called \textit{rooks} from the Young diagram of $\lambda$, such that no two rooks lie in the same row or column. Denote the set of rook placements with $k$ rooks on $\lambda$ by $\mathcal{C}_k(\lambda)$. Given such a rook placement $C \in \mathcal{C}_k(\lambda)$, \cite{Garsia_Remmel_rook} defines $\inv(C)$ to be the number of cells remaining after cancelling all the cells in the same column above and in the same row to the left of the rooks. Then \begin{equation}
	R_k(\lambda;q) = \sum_{C \in \mathcal{C}_k(\lambda)} q^{\inv(C)}.
\end{equation} \autoref{fig:rook-placement-cancelled} gives an example of a rook placement with the $\inv$ statistics.
\begin{figure}[h]
	\centering
	\begin{tikzpicture}[scale=0.8]

		\foreach \x in {0,...,5} {\draw (\x,0) rectangle ++(1,-1);}     
		\foreach \x in {0,...,3} {\draw (\x,-1) rectangle ++(1,-1);}    
		\foreach \x in {0,...,3} {\draw (\x,-2) rectangle ++(1,-1);}    
		\foreach \x in {0,1}     {\draw (\x,-3) rectangle ++(1,-1);}    
		\draw (0,-4) rectangle ++(1,-1);                                

		\node at (1.5,-1.5) {$\bullet$};
		\node at (0.5,-3.5) {$\bullet$}; 
		\node at (3.5,-2.5) {$\bullet$};
		\draw[thick, red] (1.5,0) -- (1.5,-1.5);
		\draw[thick, red] (0,-1.5) -- (1.5,-1.5);
		\draw[thick, red] (0.5,0) -- (0.5,-3.5);
		\draw[thick, red] (0,-3.5) -- (0.5,-3.5);
		\draw[thick, red] (3.5,0) -- (3.5,-2.5);
		\draw[thick, red] (0,-2.5) -- (3.5,-2.5);

	\end{tikzpicture}
	\caption{$C \in \mathcal{C}_3((6,4,4,2,1))$ with $\inv(C) = 6$}
	\label{fig:rook-placement-cancelled}
\end{figure}
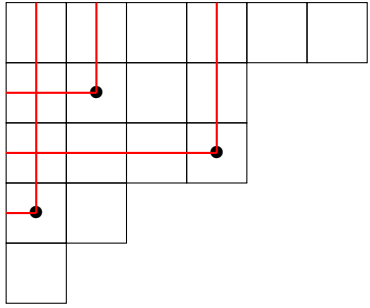

Since conjugation interchanges cells in the same column above with cells in the same row to the left of a given cell, \begin{equation}
	R_k(\lambda;q) = R_k(\lambda';q).
\end{equation} 

Because two rooks can not lie in the same row or in the same column, $R_k(\lambda;q) = 0$ for $k>\lambda_1$ or $k>\ell(\lambda)$.

\subsection{Recursions for $q$-rook numbers}\label{sec:recforqrook}For $\lambda = (\lambda_1,\lambda_2,\lambda_3,\ldots) \in \Par$, let $\widetilde{\lambda} = (\lambda_2,\lambda_3,\ldots)$ be the partition obtained by removing the first row. The  $q$-rook numbers $R_k(\lambda;q)$ for $0 \leq k \leq \lambda_1$ are determined by the recursions \cite[Theorem 1.1]{Garsia_Remmel_rook} \begin{align}\label{eq:GarsiaRemmelrec}
	R_k(\lambda;q) = q^{\lambda_1-k}R_k(\widetilde{\lambda};q) + [\lambda_1-k+1]_q R_{k-1}(\widetilde{\lambda};q), 
\end{align} with initial conditions $R_0(\lambda;q) = q^{|\lambda|}$.

\subsection{$q$-Stirling numbers}Let $n \in \ZZ_{\geq 0}$ and $\rho_n = (n-1,n-2,\ldots,1,0)$ be the staircase partition. Then \begin{equation}\label{eq:qStirling2}
R_{n-k}(\rho_n;q) = S_q(n,k) \qquad \hbox{ for } \ 0 \leq k \leq n
\end{equation} are the $q$-Stirling numbers of second kind \cite[(I.9)]{Garsia_Remmel_rook}. They satisfy the recursions $$ S_q(n,k) = q^{k-1}S_q(n-1,k-1) + [k]_q S_q(n-1,k) \hbox{ for } 0 \leq k \leq n,$$ and $S_q(0,0) = 1$, $S_q(n,k) = 0$ for $k<0$ or $k>n$.

\subsection{Rectangular $q$-rook numbers}
Let $a,b \in \ZZ_{\geq 0}$ and $0 \leq i \leq \min(a,b)$. \cite[Proposition 2.15]{CMP_rook} gives 
\begin{equation}\label{eq:rectangleqrook}
	R_i((b^a);q) = q^{(a-i)(b-i)} \dfrac{[a]_q!}{[a-i]_q!} {b \brack i}_q
\end{equation}

\subsection{$\ell(\lambda)$th $q$-rook numbers}

For $\lambda \in \Par$ with $\ell(\lambda) = \ell$,  \cite[Proposition 2.2]{CMP_rook} says
\begin{equation}\label{eq:Rlastprod}
	R_{\ell}(\lambda;q) = \prod_{i =1}^{\ell}[\lambda_{\ell-i+1}-i+1]_q.
\end{equation} 

\subsection{Tableaux formula for $q$-rook numbers}\label{sec:tabformulanots}

Let $\pi \in \DD_n$ for some $n \in \ZZ_{>0}$ and $\mu \vdash n$. Let $T \in \SYT^\pi_\mu$. Recall that \begin{align}
	\gamma(T) = \#\{(b,c) \in \mu\times \mu\,|\, \coleg(b)>\coleg(c) \hbox{ and } (T(c),T(b)) \in \Area(\pi) \},
\end{align} and for a box $b \in \mu$, let \begin{align}
	\gamma(T,b) = \#\{ c \in \mu\,|\, \coleg(b)>\coleg(c) \hbox{ and } (T(c),T(b)) \in \Area(\pi) \}, 
\end{align} then $$ \gamma(T) = \sum_{b \in \mu}\gamma(T,b).$$ For a box $b \in \mu$, denote by $\arm_{<_\pi j}(b)$ the number of boxes $c$ in the right of $b$ in the same row such that $T(c) <_\pi j$, i.e, \begin{align}
	\arm_{<_\pi j}(b) = \# \{ c \in \mu\,|\, \coleg(c) = \coleg(b) ,\, \mathrm{coarm}(c) > \mathrm{coarm}(b) ,\, T(c) <_\pi j \}.
\end{align} 

Let \begin{equation}\label{eq:wtdef}
	\wt(T;q) =  q^{n(\mu')-\# \Area(\pi) + \gamma(T)} \prod_{\substack{b \in \mu \\ \coleg(b)>0}} [\arm_{<_\pi T(b)}(\up(b))+1]_q,
\end{equation} where recall that $\# \Area(\pi)$ is the number of cells below $\pi$ strictly above the diagonal and $n(\mu') = \sum_{i} \binom{\mu_i}{2}$.

We restate \autoref{th:mainthmintrostate} from the introduction here.
\begin{theorem}\label{th:rooktableauformula}
	Let $\lambda \in \Par$ and $\pi \in \DD_n$ is such that $\lambda(\pi) = \lambda$. Then for $k \geq 0$, \begin{equation}\label{eq:R_k=sumwt}
		R_k(\lambda;q) = \sum_{\substack{\mu \vdash n \\ \mu_1 = n-k}} \sum_{T \in \SYT^\pi_\mu} \wt(T;q).
	\end{equation}
\end{theorem}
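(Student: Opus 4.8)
The plan is to prove the identity by induction on the number of rows of $\lambda$ (equivalently, on the semilength $n$ of the Dyck path), using the Garsia--Remmel recursion \eqref{eq:GarsiaRemmelrec} as the driving identity. The base case is $\lambda = \emptyset$, where $R_0(\emptyset;q) = 1$ and the only $\pi \in \DD_0$ forces $\mu = \emptyset$, $\wt(T;q) = q^0 = 1$, so both sides agree; one should also check the small case $n=1$ to be safe. For the inductive step, fix $\lambda$ with $\lambda_1$ columns in the first row and let $\widetilde{\lambda} = (\lambda_2,\lambda_3,\ldots)$. The key is to choose a Dyck path $\pi$ with $\lambda(\pi) = \lambda$ and relate it to a Dyck path $\widetilde{\pi}$ with $\lambda(\widetilde{\pi}) = \widetilde{\lambda}$, namely by deleting the first row of the grid; concretely, $\pi$ is obtained from $\widetilde\pi$ by prepending one $N$ step and inserting $\lambda_1$ extra $E$ steps near the start, so that $[n] = \{1\} \sqcup \{2,\ldots,n\}$ and the poset $<_\pi$ restricted to $\{2,\ldots,n\}$ agrees (after relabeling) with $<_{\widetilde\pi}$, while the new minimal-ish element $1$ sits below exactly the boxes corresponding to the first row of $\lambda$.

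The heart of the argument is to split the right-hand side of \eqref{eq:R_k=sumwt} according to the position of the box containing the entry $1$ in a tableau $T \in \SYT^\pi_\mu$ --- or, more usefully, according to whether or not the first \emph{column} of $\mu$ has length $\ge 2$, and more precisely by peeling off the first row of $\mu$. Given $\mu \vdash n$ with $\mu_1 = n-k$, write $\mu = (\mu_1) \cup \nu$ where $\nu = (\mu_2,\mu_3,\ldots) \vdash n - \mu_1 = k$. A tableau $T \in \SYT^\pi_\mu$ is determined by the set $S$ of entries appearing in the first row together with a standard-like filling of the remaining shape. The two terms of \eqref{eq:GarsiaRemmelrec}, $q^{\lambda_1 - k} R_k(\widetilde\lambda;q)$ and $[\lambda_1 - k + 1]_q R_{k-1}(\widetilde\lambda;q)$, should correspond respectively to: (i) tableaux where the box $\up(b_0)$ above the first box $b_0$ of the second row is "filled around" in a way contributing the pure power $q^{\lambda_1-k}$ via the $n(\mu') - \#\Area(\pi)$ and $\gamma$ exponents; and (ii) tableaux where the arm-factor $[\arm_{<_\pi T(b_0)}(\up(b_0)) + 1]_q$ over the first-row boxes collapses to $[\lambda_1 - k + 1]_q$. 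The bookkeeping is that removing the first row of $\mu$ changes $n(\mu')$ by $\binom{\mu_1}{2} = \binom{n-k}{2}$ in a controlled way, $\#\Area(\pi) - \#\Area(\widetilde\pi)$ accounts for the $\lambda_1$ new area cells, and the $\arm_{<_\pi}$ statistics for boxes in rows $\ge 3$ are unchanged; the genuinely new contributions come only from the boxes $b$ in the second row of $\mu$, whose $\up(b)$ lies in the (now-removed) first row.

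The main obstacle I anticipate is precisely matching the $q$-exponents and the arm-products after the first-row peel: one must verify that summing $\wt(T;q)$ over all choices of which entries go in the first row of $\mu$, for fixed restriction to $\nu$, produces exactly the factor $q^{\lambda_1-k}$ in one regime and exactly $[\lambda_1 - k + 1]_q$ in the other, with the leftover weight being $\wt$ of the corresponding $\widetilde\pi$-tableau on shape $\nu$ or $\nu$-plus-a-box. This requires a careful analysis of how $\gamma(T)$ decomposes — separating pairs $(b,c)$ with both boxes in rows $\ge 2$ (giving $\gamma$ of the smaller tableau) from pairs where one box is in the first row — and of how $[\arm_{<_\pi T(b)}(\up(b))+1]_q$ for $b$ in the second row telescopes, since $\up(b)$ ranges over the first row and the condition $T(c) <_\pi T(b)$ is governed by the special element $1$ and the structure of $\pi$ near its start. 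To make this tractable I would first establish a clean lemma stating: for fixed $\nu \vdash k$ and fixed filling on the $\nu$-part, the sum of $\wt(T;q)$ over extensions $T$ to shape $(\mu_1)\cup\nu$ equals $q^{a}(\text{something})$ where $a$ depends only on $\lambda_1, k$, and then match this lemma's output term-by-term against \eqref{eq:GarsiaRemmelrec}. An alternative, possibly cleaner route is to induct instead via a "remove the last column" or "remove a corner box" operation corresponding to the conjugate recursion or to $\lambda \mapsto \lambda - \varepsilon_i$, exploiting $R_k(\lambda;q) = R_k(\lambda';q)$, but I expect the first-row peel aligned with \eqref{eq:GarsiaRemmelrec} to be the most direct.
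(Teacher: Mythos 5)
Your overall strategy --- establishing \eqref{eq:R_k=sumwt} by showing that its right-hand side satisfies the Garsia--Remmel recursion \eqref{eq:GarsiaRemmelrec} together with the initial condition $R_0(\lambda;q)=q^{|\lambda|}$ --- is exactly the one the paper uses. However, the concrete inductive step you describe has two misalignments that would derail it. First, the path operation is misidentified: removing the first row of $\lambda(\pi)$ corresponds to deleting the \emph{last} occurrence of $NE$ in $\pi$ (the top row of the $n\times n$ grid), which removes the \emph{maximal} poset element $n$; indeed $\lambda_1=\#\{i \,:\, i<_\pi n\}$. The element $1$ you propose to adjoin near the start of the path instead governs the first \emph{column} of $\lambda$ (it lies below exactly $\lambda'_1$ elements), so the modification you describe realizes the conjugate recursion, not \eqref{eq:GarsiaRemmelrec}. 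Second, and more structurally, the tableau decomposition must remove a single box, not the whole first row of $\mu$: the recursion relates shapes $\mu\vdash n$ with $\mu_1=n-k$ to shapes $\nu\vdash n-1$ with $\nu_1\in\{n-k-1,\,n-k\}$, i.e.\ $\mu=\nu+\varepsilon_i$, whereas stripping the first row of $\mu$ drops $n-k$ boxes at once and lands on a partition of $k$, which matches no term of \eqref{eq:GarsiaRemmelrec}. Your parenthetical alternative (``remove a corner box'') is in fact the right instinct.

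The correct split is by the row $i$ containing the entry $n$; since any $j$ with $(j,n)\in\Area(\pi)$ is $<_\pi$-maximal, such entries can only occupy leg-$0$ boxes, which controls where $n$ may be appended. Writing $T^{+n,i}$ for the tableau obtained from $T\in\SYT^{\pi'}_\nu$ by appending $n$ to row $i$, and $N(j)$ for the number of leg-$0$ boxes $b$ in row $j$ with $T(b)<_\pi n$, one computes $\wt(T^{+n,i};q)/\wt(T;q)=q^{\nu_1-n+1+\lambda_1}\,q^{-(N(1)+\cdots+N(i-1))}[N(i-1)]_q$. The case $i=1$ gives the pure power $q^{\lambda_1-k}$, while the sum over $i>1$ \emph{telescopes}, using $\sum_{j\ge 1}N(j)=\nu_1-(n-1)+\lambda_1$, to $[\lambda_1-n+\nu_1+1]_q=[\lambda_1-k+1]_q$. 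This telescoping identity is the essential computation that your plan does not anticipate: your proposed lemma (fix the filling of $\nu$ and sum over choices of first-row entries) has no counterpart here, because the fibers of the correct peel are indexed by a single row index $i$, not by a subset of entries.
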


\subsection{Example} For the Dyck path in \autoref{fig:pi}, $n = 6$, and let $k = 3$. Then by \eqref{eq:SYTpieg}, the sum in \eqref{eq:R_k=sumwt} runs only over two tableaux, $$ T = \ytableaushort{123,456} \in \SYT^\pi_{(3,3)} \qquad\hbox{ and }\qquad S = \ytableaushort{123,45,6} \in \SYT^\pi_{(3,2,1)},$$ with $$ \gamma(T) = 0 \qquad\hbox{ and }\qquad \gamma(S) = 1,$$ and using $\#\Area(\pi) = 5$, the weights are \begin{align*}
	\wt(T;q) = q[2]_q[3]_q \qquad\hbox{ and }\qquad \wt(S;q) =  q^{-1} q [2]_q[3]_q = [2]_q[3]_q.
\end{align*} So, \begin{align*}
R_{3}((4,3,3);q) = q[2]_q[3]_q + [2]_q[3]_q = [3]_q[2]_q^2.
\end{align*}

\subsection{Proof of \autoref{th:rooktableauformula}}

We now prove \autoref{th:rooktableauformula} by showing that the right hand side of \eqref{eq:R_k=sumwt} satisfies the recursions \eqref{eq:GarsiaRemmelrec} for the $q$-rook numbers. 

\begin{lemma}\label{lem:inmax}
	Let $\pi \in \DD_n$. If $(i,n) \in \Area(\pi)$ then $i$ is a maximal element with respect to $<_\pi$ order, i.e, there is no $j \in [n]$ such that $i <_\pi j$.
\end{lemma}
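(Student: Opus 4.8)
The plan is to unwind the definitions of $\Area(\pi)$ and the partial order $<_\pi$ and show that the hypothesis $(i,n) \in \Area(\pi)$ forces $i$ to have no strict successors. Recall that for $1 \le i < j \le n$, we have $i <_\pi j$ exactly when $(i,j) \notin \Area(\pi)$, i.e. the cell $(i,j)$ lies strictly above the Dyck path $\pi$. So to prove $i$ is $<_\pi$-maximal, I must rule out any $j$ with $i <_\pi j$; since $<_\pi$ only relates smaller indices to larger ones, the only candidates are $j$ with $i < j \le n$.

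First I would use the ``interval'' property of $<_\pi$ recorded just before \autoref{sec:partitions} (and restated in \S``Dyck path to poset''): if $1 \le i <_\pi j \le k \le n$ then $i <_\pi k$. Equivalently, the set of $j$ with $(i,j) \in \Area(\pi)$ (for $j > i$) is an initial segment of $\{i+1, \dots, n\}$, and its complement within that range is a final segment. Now the hypothesis says $(i,n) \in \Area(\pi)$, i.e. $n$ itself is \emph{not} a strict successor of $i$. Combined with the interval property, this means that no $j$ with $i < j \le n$ can satisfy $i <_\pi j$: if some such $j$ did, then taking $k = n \ge j$ would give $i <_\pi n$, contradicting $(i,n) \in \Area(\pi)$. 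Since there are no $j \le i$ available either (the order only goes ``upward'' in index, and $i \not<_\pi i$), $i$ is maximal.

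Alternatively — and this is the cleaner route given the translation to partitions in \S``Dyck path to partition'' — I would use the identity stated there: $i <_\pi j$ iff $j > n - \lambda(\pi)'_i$. Then $(i,n) \in \Area(\pi)$ means $i \not<_\pi n$, i.e. $n \le n - \lambda(\pi)'_i$, forcing $\lambda(\pi)'_i = 0$. But then $i <_\pi j$ would require $j > n - 0 = n$, which is impossible for $j \in [n]$. Hence no strict successor exists and $i$ is maximal. This phrasing makes the statement essentially immediate once the partition dictionary is in hand.

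There is no real obstacle here; the only thing to be careful about is that the partial order $<_\pi$ by construction never relates $j$ to $i$ when $j > i$ (it is defined only via $i < j$ and $(i,j) \notin \Area(\pi)$), so ``maximal'' genuinely means ``no strict successor of any kind,'' and I should state that explicitly so the reader does not worry about larger-indexed or equal elements. I would keep the proof to two or three sentences, citing whichever of the two characterizations of $<_\pi$ is most convenient at that point in the text (most likely the partition one, $i <_\pi j \iff j > n - \lambda(\pi)'_i$), and note in passing that this lemma is exactly what is needed to handle the case where the largest label $n$ sits in a cell lying below $\pi$, which is presumably the degenerate situation the induction on $\lambda_1$ must dispose of.
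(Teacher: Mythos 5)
Your proof is correct and is essentially the paper's argument: the paper likewise uses the fact that the cells of $\Area(\pi)$ in row $i$ form an initial segment of $\{i+1,\dots,n\}$ (equivalently, the interval property $i <_\pi j \le k \Rightarrow i <_\pi k$), so that $(i,n)\in\Area(\pi)$ forces $(i,j)\in\Area(\pi)$ for every $j\in\{i+1,\dots,n\}$ and hence no $j$ with $i<_\pi j$ exists. Your alternative phrasing via $\lambda(\pi)'_i=0$ is just a one-line restatement of the same fact through the partition dictionary.
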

\begin{proof}
	If $(i,k) \in \Area(\pi)$ then $(i,j) \in \Area(\pi)$ for all $j \in \{i+1,\ldots,k\}$. In particular, $(i,n) \in \Area(\pi)$ means that $(i,j) \in \Area(\pi)$ for all $j \in \{i+1,\ldots,n\}$.
\end{proof} 

\begin{lemma}\label{lem:leg0lem}
	Let $\pi \in \DD_n$ and $\mu \vdash n$. Let $T \in \SYT^\pi_\mu$. If $b \in \mu$ is such that $(T(b),n) \in \Area(\pi)$ then $\leg(b) = 0$.
\end{lemma}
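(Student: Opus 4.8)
The plan is to argue by contradiction. Suppose $b \in \mu$ has $\leg(b) \geq 1$, so there is a box $b' \in \mu$ directly below $b$ in the same column, i.e.\ $b = \up(b')$ in the notation of \S\ref{sec:tabformulanots}. Since $T \in \SYT^\pi_\mu$, the defining condition gives $T(\up(b')) <_\pi T(b')$, that is, $T(b) <_\pi T(b')$. In particular there exists $j = T(b') \in [n]$ with $T(b) <_\pi j$, so $T(b)$ is \emph{not} a maximal element with respect to $<_\pi$.

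On the other hand, the hypothesis $(T(b), n) \in \Area(\pi)$ together with \autoref{lem:inmax} says precisely that $T(b)$ \emph{is} a maximal element of the $<_\pi$ order. This contradiction forces $\leg(b) = 0$.

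I expect this to be essentially immediate once \autoref{lem:inmax} is in hand; the only point requiring a moment's care is matching conventions, namely checking that "$\leg(b) \geq 1$" is equivalent to "there is a box of $\mu$ below $b$ in its column" in the English notation used here, and that this box is exactly the one whose $\up$ is $b$ — both of which follow directly from the definitions of $\leg$, $\up$, and $\coleg$ recalled in the Partitions subsection. So there is no real obstacle; the lemma is a one-line consequence of \autoref{lem:inmax} and the $\SYT^\pi_\mu$ defining relation, and its role is presumably to handle the boxes in the last column when setting up the induction on $\pi$ (or on $n$) in the proof of \autoref{th:rooktableauformula}.
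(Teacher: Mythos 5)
Your argument is correct and is essentially the paper's own proof: both invoke \autoref{lem:inmax} to conclude that $T(b)$ is $<_\pi$-maximal and then use the defining condition of $\SYT^\pi_\mu$ to rule out a box below $b$. You merely phrase it as a contradiction where the paper states it directly; there is no substantive difference.
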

\begin{proof}
	By \autoref{lem:inmax}, $T(b)$ is maximal with respect to $<_\pi$. Hence there can be no box below $T(b)$, so $\leg(b) = 0$. 
\end{proof}

Let $\pi \in \DD_n$ with $\lambda(\pi) = \lambda$ and $\pi' \in \DD_{n-1}$ be the path obtained by removing the first row of $\pi$, i.e, $\pi'$ is obtained from $\pi$ by removing the last occurence of $NE$ in $\pi$. For the path $\pi$ from \autoref{fig:pi}, the path $\pi'$ is shown in \autoref{fig:pi'}.

\begin{figure}[h]
	\begin{minipage}{0.45 \textwidth}
		\begin{tikzpicture}
			\NEpath{0,0}{5}{5}{1,1,1,0,0,0,1,1,0,0};
		\end{tikzpicture}
	\end{minipage}
	\hfill
	\begin{minipage}{0.45 \textwidth}
		\begin{align*}
			&\pi' = N^3E^3N^2E^2 \in \DD_5,
			\\
			&\Area(\pi') = \{(1,2),(1,3),(2,3),(4,5)\},
			\\
			&\lambda(\pi') = (3,3). 	
		\end{align*}
	\end{minipage}
	\caption{Removal of last occurence of $NE$ from $\pi$}
	\label{fig:pi'}
\end{figure}

Then $\lambda(\pi')$ is obtained by removing the first row of $\lambda(\pi)$, which we denote by $\widetilde{\lambda} = (\lambda_2,\ldots).$ If $T \in \SYT^\pi_\mu$ for some $\mu \vdash n$ then if we remove the box with entry $n$ from $T$ we obtain an element of $\SYT^{\pi'}_{\mu-\varepsilon_i}$, where $i$ is the row of the box with entry $n$ and $\mu-\varepsilon_i = (\mu_1,\ldots,\mu_{i-1},\mu_i-1,\mu_{i+1},\ldots)$.

Note that $$\#\Area(\pi) - \#\Area(\pi') = n-1-\lambda_1.$$

Let $T \in \SYT^{\pi'}_\nu$ for some $\nu \vdash n-1$. Let $T^{+n,i}$ be the tableau obtained by adding a box with entry $n$ in the $i$th row of $T$, if $\nu+\varepsilon_i \in \Par$ and $T^{+n,i}$ is a valid tableau in $\SYT^{\pi}_{\nu+\varepsilon_i}$.

\begin{lemma}
	Let $T \in \SYT^{\pi'}_\nu$ and $i$ is such that $T^{+n,i} \in \SYT^\pi_{\nu+\varepsilon_i}$. For $j \geq 1$, let \begin{align}\label{eq:Njdef}
		N(j) &= \# \{ b \in \nu\,|\, \coleg_\nu(b) = j-1 ,\, \leg_\nu(b) = 0 \hbox{ and } T(b) <_\pi n \}, 
	\end{align} and $N(0) = 0$.
	Then 
	\begin{align}\label{eq:wtcomparisongen}
		\dfrac{\wt(T^{+n,i};q)}{\wt(T;q)}=q^{\nu_1 - n + 1  + \lambda_1} \cdot q^{- (N(1)+\ldots + N(i-1))} [N(i-1)]_q.
	\end{align}
\end{lemma}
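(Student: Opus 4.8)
The plan is to carefully track the change in each factor of $\wt(T;q)$ as we add the box with entry $n$ to the $i$th row of $T \in \SYT^{\pi'}_\nu$, producing $T^{+n,i} \in \SYT^\pi_{\nu+\varepsilon_i}$. Write $\mu = \nu + \varepsilon_i$ and $b_0$ for the new box, so $\coleg(b_0) = i-1$ and, by \autoref{lem:leg0lem}, $\leg_\mu(b_0) = 0$ since $(n,n) \in \Area(\pi)$ (the last cell before the diagonal top is always in the area — or rather $T^{+n,i}(b_0) = n$ and we need $(n,n)$ to lie in $\Area(\pi)$... more precisely, since $n$ is maximal with respect to $<_\pi$, $b_0$ has no box below it). First I would isolate the three sources of change: (a) the prefactor $q^{n(\mu')-\#\Area(\pi)}$ versus $q^{n(\nu')-\#\Area(\pi')}$; (b) the new $q^{\gamma(\cdot)}$ exponent; and (c) the product $\prod_{\coleg(b)>0}[\arm_{<_\pi T(b)}(\up(b))+1]_q$, including the brand-new factor contributed by $b_0$ itself (since $\coleg(b_0) = i-1 > 0$ when $i \geq 2$; the case $i=1$ must be handled with $N(0) = 0$ and the convention $[0]_q = 0$ making the right side vanish, which should match the fact that no tableau in $\SYT^\pi_{\nu+\varepsilon_1}$ exists unless... actually $i=1$ always gives a valid tableau, so I need $[N(0)]_q = 0$ to be consistent, meaning the $i=1$ case needs separate attention — probably the formula is only asserted for the recursion's purposes where $i \geq 2$ contributes, but I should state it cleanly).

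For (a): $n(\mu') - n(\nu') = \binom{\mu_i}{2} - \binom{\mu_i - 1}{2} = \mu_i - 1 = \nu_i$, and $\#\Area(\pi) - \#\Area(\pi') = n - 1 - \lambda_1$ as noted in the excerpt, so this factor contributes $q^{\nu_i - (n-1-\lambda_1)} = q^{\nu_i - n + 1 + \lambda_1}$. Wait — the claimed answer has $\nu_1$, not $\nu_i$; this discrepancy is exactly why the $\arm$ of $b_0$ enters. The key combinatorial identity I expect to need is that the new box $b_0$, sitting at the end of row $i$, contributes a factor $[\arm_{<_\pi T^{+n,i}(b_0)}(\up(b_0)) + 1]_q = [\arm_{<_\pi n}(\up(b_0)) + 1]_q$, and $\arm_{<_\pi n}(\up(b_0)) + 1$ counts boxes $c$ weakly right of $\up(b_0)$ in row $i-1$ of $\nu$ with $T(c) <_\pi n$ — which, because all such entries automatically have $\leg_\nu = 0$ if they are below $n$... no: I'd need to show $\arm_{<_\pi n}(\up(b_0)) + 1 = N(i-1)$, using that (i) every entry in row $i-1$ strictly right of position $\nu_i$ is $<_\pi n$ automatically or not, and (ii) the $\leg_\nu = 0$ condition in $N(i-1)$ matches which boxes of row $i-1$ actually sit above nothing, hence can be "the one whose arm we count." This matching — relating the local arm statistic at the insertion site to the global count $N(i-1)$ — is the crux, and I expect it to hinge on \autoref{lem:inmax}/\autoref{lem:leg0lem}: a box $c$ in row $i-1$ has $T(c) <_\pi n$ forcing (if $\leg_\nu(c) > 0$) its down-neighbor's entry to also be $<_\pi n$, so by induction down the column the relevant boxes are exactly... this needs care.

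For (b): $\gamma(T^{+n,i}) - \gamma(T) = \gamma(T^{+n,i}, b_0) = \#\{c \in \nu : \coleg(c) < i-1 \text{ and } (T(c), n) \in \Area(\pi)\}$, since adding $b_0$ only creates new pairs with $b_0$ as the lower box (it is in a fresh column position, and $n$ is the largest entry so $(n, T(b)) \notin \Area$ ever). Now $(T(c), n) \in \Area(\pi)$ means $T(c) \nless_\pi n$, and combined with $\coleg(c) \leq i-2$, summing over rows $1, \ldots, i-1$ this equals $\sum_{r=1}^{i-1}(\mu_r - N(r))$ roughly — boxes in row $r$ minus those with entry $<_\pi n$ and $\leg = 0$. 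The precise bookkeeping of which boxes in earlier rows have $T(c) \nless_\pi n$ versus the $N(r)$ counts is where I'd combine everything: the product's change (from inserting $b_0$ and from any shifted $\up$-relations — but inserting at the end of a row doesn't change $\up(b)$ for existing $b$ except possibly creating $\up$ for boxes... no, no box below row $i$ position $\nu_i + 1$ exists since $\leg(b_0) = 0$), the $\gamma$ change, and the prefactor change must all telescope to $q^{\nu_1 - n + 1 + \lambda_1} q^{-(N(1) + \cdots + N(i-1))}[N(i-1)]_q$.

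The main obstacle I anticipate is step (b) together with reconciling $\nu_i$ against $\nu_1$: I must show that the discrepancy $q^{\nu_1 - \nu_i}$ between the naive prefactor contribution $q^{\nu_i - n + 1 + \lambda_1}$ and the target, combined with the $\gamma$-difference $q^{\gamma(T^{+n,i},b_0)}$ and the new arm-factor $[N(i-1)]_q$, exactly produces the stated $q^{-(N(1)+\cdots+N(i-1))}$. Concretely I expect the identity
\[
\gamma(T^{+n,i}, b_0) + \big(\text{arm-factor exponent contribution}\big) + (\nu_i - \nu_1) \;=\; -\big(N(1) + \cdots + N(i-1)\big) + (\text{something from } [N(i-1)]_q)
\]
to reduce, via $\nu_r = \mu_r$ for $r < i$ and $N(r) \leq \mu_r$, to the statement that the number of boxes in rows $1,\ldots,i-1$ with entry $\nless_\pi n$ equals $\sum_{r<i}(\mu_r - N(r)) = (\nu_1 + \cdots + \nu_{i-1}) - (N(1)+\cdots+N(i-1))$, while $\arm_{<_\pi n}(\up(b_0)) + 1 = N(i-1)$ supplies the last factor; the $\nu_1$ versus $\nu_i$ swap and the telescoping of $\binom{\mu_i}{2}$ must then be checked to close the identity. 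I would organize the write-up as: (1) compute the prefactor ratio; (2) compute $\gamma(T^{+n,i}) - \gamma(T)$ as a sum over rows above $i$; (3) identify the new arm factor with $[N(i-1)]_q$; (4) observe no other arm factors change; (5) add the exponents and simplify, using the $N(r)$ definitions and $\sum_{r=1}^{n}\mu_r = n$ or the relevant partial sums, to land on \eqref{eq:wtcomparisongen}.
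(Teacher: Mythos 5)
Your decomposition into (a) the prefactor $q^{n(\cdot')-\#\Area(\cdot)}$, (b) the $\gamma$-difference, and (c) the single new arm factor is exactly the paper's strategy, and (a) and (c) are essentially right: $n((\nu+\varepsilon_i)')-n(\nu')=\nu_i$, $\#\Area(\pi)-\#\Area(\pi')=n-1-\lambda_1$, and the new factor is $[\arm_{<_\pi n}(\up(b_0))+1]_q=[N(i-1)]_q$. For that last identification no induction down columns is needed: the boxes of row $i-1$ with $\leg_\nu=0$ are precisely $\up(b_0)$ and the boxes in its arm (columns $\nu_i+1,\dots,\nu_{i-1}$), and $T(\up(b_0))<_\pi n$ is forced by the validity of $T^{+n,i}$, which supplies the $+1$.

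The genuine gap is in step (b). You count the contribution of row $r$ to $\gamma(T^{+n,i},b_0)$ as $\mu_r-N(r)$ and later commit to $\sum_{r<i}(\mu_r-N(r))=(\nu_1+\cdots+\nu_{i-1})-\sum_{r<i}N(r)$; this is false. A box $c$ in row $r$ contributes iff $(T(c),n)\in\Area(\pi)$, and by \autoref{lem:leg0lem} any such box has $\leg_\nu(c)=0$; equivalently, every box with $\leg_\nu(c)>0$ automatically satisfies $T(c)<_\pi n$ and contributes nothing. So the row-$r$ count is $(\nu_r-\nu_{r+1})-N(r)$, not $\nu_r-N(r)$, and summing over $r=1,\dots,i-1$ telescopes to $\nu_1-\nu_i-(N(1)+\cdots+N(i-1))$. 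It is this $\nu_1-\nu_i$ --- not the arm factor, as you suggest --- that cancels the $\nu_i$ from the prefactor and produces the $\nu_1$ in \eqref{eq:wtcomparisongen}; with your count the exponent would come out as $\nu_i+\sum_{r<i}\nu_r-\sum_{r<i}N(r)-(n-1-\lambda_1)$, already off by a factor of $q^{\nu_2}$ for $i=2$ whenever $\nu_2>0$. (Your side observation that the literal formula returns $0$ for $i=1$ because $[N(0)]_q=0$ is a fair criticism of the statement; the paper sidesteps it by treating $i=1$ separately in \autoref{lem:wtcomparison}, where only the exponent $q^{\nu_1-(n-1-\lambda_1)}$ is used.)
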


\begin{proof}

	Using \autoref{lem:leg0lem}, \begin{align}
		N(j) &= \# \{ b \hbox{ in row } j \hbox{ of } T \hbox{ with } \leg(b) = 0 \hbox{ and } T(b) <_\pi n \} 
		\nonumber
		\\
		&= \arm_{<_\pi n}((j,\nu_{j+1}+1))+1 
		\nonumber
		\\
		&= \nu_{j}-\nu_{j+1} - \#\{b \hbox{ in row } j \hbox{ of } T \hbox{ with } (T(b),n) \in \Area(\pi)\}.
		\label{eq:Njprecrel}
	\end{align} 
	Then 
	\begin{align*}
		\gamma(T^{+n,i}) - \gamma(T) 
		&= \gamma(T^{+n,i},(i,\nu_i+1)) 
		= \sum_{j=1}^{i-1}(\nu_j-\nu_{j+1} - N(j)) 
		\\
		&= \nu_1 - \nu_i -  (N(1)+\ldots + N(i-1)).
	\end{align*} 	
	Using 	\begin{align*}
		\# \Area(\pi) - \# \Area(\pi') = n - 1 - \lambda_1,
		\qquad \hbox{ and } \qquad
		n((\nu+\varepsilon_i)')-n(\nu') = \nu_i,
	\end{align*} then
	 \begin{align*}
		&\big(n((\nu+\varepsilon_i)')-\# \Area(\pi)+ \gamma(T^{+n,i}) \big)
		- \big( n(\nu')-\# \Area(\pi') + \gamma(T) \big)
		\\
		&= \nu_i - (n-1-\lambda_1) +  \gamma(T^{+n,i},(i,\nu_i+1))
		\\
		&= \nu_i - (n-1-\lambda_1) + \nu_1 - \nu_i -  (N(1)+\ldots + N(i-1))
		\\
		&= \nu_1 - (n-1-\lambda_1)  - (N(1)+\ldots+N(i-1)),
	\end{align*} 
	and \begin{align*}
		\dfrac{\prod\limits_{\substack{b \in \nu+\varepsilon_i \\ \coleg_{\nu+\varepsilon_i}(b)>0}} [\arm_{<_\pi T^{+n,i}(b)}(\up(b))+1]_q}{\prod\limits_{\substack{b \in \nu \\ \coleg_\nu(b)>0}} [\arm_{<_{\pi'} T(b)}(\up(b))+1]_q} &= [\arm_{<_\pi T^{+n,i}((i,\nu_i+1))}(\up((i,\nu_i+1)))+1]_q
		\\
		&= [\arm_{<_{\pi} n}((i-1,\nu_i+1))+1]_q = [N(i-1)]_q.
	\end{align*}
\end{proof}

\begin{lemma}\label{lem:wtcomparison}
	Let $T \in \SYT^{\pi'}_\nu$ where $\pi' \in \DD_{n-1}$ is obtained from $\pi \in \DD_n$ by deleting the rightmost occurence of $NE$. Then with the notations from above,
	\begin{align}\label{eq:wtcomparisoncases}
		\sum_{i>1} \dfrac{\wt(T^{+n,i};q)}{\wt(T;q)} = [\lambda_1-n+\nu_1+1]_q, \qquad \hbox{ and } \qquad \dfrac{\wt(T^{+n,1};q)}{\wt(T;q)} = q^{\nu_1 - (n-1-\lambda_1)}.
	\end{align} 
\end{lemma}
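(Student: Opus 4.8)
The plan is to establish the two identities separately: the second (the value of the single ratio $\wt(T^{+n,1};q)/\wt(T;q)$) by a direct computation in the style of the previous lemma, and the first (the sum over $i\ge 2$) by substituting \eqref{eq:wtcomparisongen} and telescoping. I note first that \eqref{eq:wtcomparisongen} was proved only for $i\ge 2$ — its proof refers to $\up((i,\nu_i+1))$, which exists only when $\coleg((i,\nu_i+1))=i-1>0$ — and that it \emph{fails} at $i=1$, where its right-hand side would carry the factor $[N(0)]_q=0$ while the correct value $q^{\nu_1-(n-1-\lambda_1)}$ is nonzero. Hence $i=1$ must be handled on its own.

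For $i=1$: appending the entry $n$ at the right end of the first row always produces a valid $T^{+n,1}\in\SYT^\pi_{\nu+\varepsilon_1}$, since the new box $(1,\nu_1+1)$ has coleg $0$, so it imposes no $\pi$-condition and is nobody's $\up$. I would then check that the product and the $\gamma$-part of $\wt$ are both unchanged. The product over boxes of positive coleg does not move: the new box is excluded from it, and for every old box $b$ with $\coleg(b)>0$ the number $\arm_{<_\pi T(b)}(\up(b))$ is unaffected, because the only new cell that can enter such an arm is $(1,\nu_1+1)$, which carries the value $n$ and fails $n<_\pi T(b)$ (as $T(b)\le n-1$), while on $[n-1]$ the relations $<_\pi$ and $<_{\pi'}$ coincide (a short consequence of $i<_\pi j\iff j>n-\lambda(\pi)'_i$ together with $\lambda(\pi')=\widetilde\lambda$). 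Similarly $\gamma(T^{+n,1})=\gamma(T)$: the new box contributes nothing to $\gamma(T^{+n,1},(1,\nu_1+1))$ (its coleg is $0$) and nothing to $\gamma(T^{+n,1},b)$ for old $b$ (that would need $(n,T(b))\in\Area(\pi)$, impossible). Hence the exponent of $q$ in $\wt$ moves only through $n((\nu+\varepsilon_1)')-n(\nu')=\nu_1$ and $\#\Area(\pi)-\#\Area(\pi')=n-1-\lambda_1$, giving $\wt(T^{+n,1};q)/\wt(T;q)=q^{\nu_1-(n-1-\lambda_1)}$.

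For the sum over $i\ge 2$: put $M_j=N(1)+\cdots+N(j)$ and $M_0=0$. I would first observe that the right-hand side of \eqref{eq:wtcomparisongen} also vanishes for the finitely many $i\ge 2$ at which $T^{+n,i}$ is undefined — there $N(i-1)=0$, either because row $i-1$ of $\nu$ has no box of zero leg, or because its leftmost such box already violates $<_\pi n$ — so that $\sum_{i>1}$ may be taken over all $i\ge 2$. Reindexing by $j=i-1$, \eqref{eq:wtcomparisongen} then gives $\sum_{i>1}\wt(T^{+n,i};q)/\wt(T;q)=q^{\nu_1-n+1+\lambda_1}\sum_{j\ge1}q^{-M_j}[N(j)]_q$. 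Since $M_j=M_{j-1}+N(j)$, one has $q^{-M_j}[N(j)]_q=(q^{-M_{j-1}}-q^{-M_j})/(q-1)$, so this finite sum (the summand is $0$ for $j>\ell(\nu)$) telescopes to $(1-q^{-M_\infty})/(q-1)=q^{-M_\infty}[M_\infty]_q$ with $M_\infty=\sum_j N(j)$. From the identity $N(j)=\nu_j-\nu_{j+1}-\#\{b\text{ in row }j\text{ of }T:(T(b),n)\in\Area(\pi)\}$ from the previous proof, $M_\infty=\nu_1-\#\{b\in\nu:(T(b),n)\in\Area(\pi)\}$; and $i<_\pi j\iff j>n-\lambda(\pi)'_i$ shows $(i,n)\in\Area(\pi)$ exactly when $\lambda_1<i<n$, so, $T$ being a bijection $\nu\to[n-1]$, this count is $(n-1)-\lambda_1$. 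Thus $M_\infty=\nu_1-n+1+\lambda_1$ and the sum collapses to $q^{\nu_1-n+1+\lambda_1}q^{-M_\infty}[M_\infty]_q=[\lambda_1-n+\nu_1+1]_q$. I expect the main obstacle to be purely organizational — carefully verifying the two no-change claims for $i=1$ and the vanishing for the undefined $i\ge 2$; the telescoping itself is routine.
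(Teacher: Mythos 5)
Your proof is correct and takes essentially the same route as the paper: for $i\ge 2$ you telescope \eqref{eq:wtcomparisongen} against the identity $\sum_{j\ge1}N(j)=\nu_1-(n-1)+\lambda_1$ obtained from \eqref{eq:Njprecrel}, exactly as the paper does. Your two extra checks are in fact improvements on the paper's write-up: the paper disposes of the second identity by ``\eqref{eq:wtcomparisongen} for $i=1$'', which read literally with $N(0)=0$ gives $[N(0)]_q=0$ rather than $q^{\nu_1-(n-1-\lambda_1)}$ (your direct computation for $i=1$ is the right fix), and the paper silently assumes that the terms with $T^{+n,i}$ undefined contribute $0$ to the telescoping sum, which is the vanishing $N(i-1)=0$ you verify explicitly.
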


\begin{proof}
We use the same notation as before from \eqref{eq:Njdef}. Using \eqref{eq:Njprecrel}, 
	\begin{align*}
		\sum_{j \geq 1} N(j) &= \sum_{j \geq 1}(\nu_{j}-\nu_{j+1} - \#\{b \hbox{ in row } j \hbox{ of } T \hbox{ with } (T(b),n) \in \Area(\pi)\})
		\\
		&= \nu_1 - \#\{b \in \nu  \hbox{ with } (T(b),n) \in \Area(\pi)\} 
		\\
		&= \nu_1 - \#\{1 \leq j \leq n-1 \hbox{ with } (j,n) \in \Area(\pi)\}
		\\
		&= \nu_1 - (n-1) + \lambda_1.
	\end{align*} Then using \eqref{eq:wtcomparisongen},
	\begin{align*}
		\sum_{i>1} \dfrac{\wt(T^{+n,i};q)}{\wt(T;q)}
		&= q^{\nu_1 - n +1  + \lambda_1} \sum_{i>1}   q^{- (N(1)+\ldots + N(i-1))} [N(i-1)]_q
		\\
		&= q^{\nu_1 - n +1  + \lambda_1} \sum_{i>1}   q^{- (N(1)+\ldots + N(i-1))} \dfrac{1-q^{N(i-1)}}{1-q}
		\\
		&= q^{\nu_1 - n +1  + \lambda_1} \sum_{i>1} \dfrac{q^{- (N(1)+\ldots + N(i-1))} - q^{- (N(1)+\ldots + N(i-2))}}{1-q}
		\\
		&= q^{\nu_1 - n +1  + \lambda_1} \dfrac{q^{-\sum_{j \geq 1} N(j)}  - 1}{1-q}
		= q^{\nu_1 - n +1  + \lambda_1} \dfrac{q^{-\lambda_1 + n- \nu_1 -1}  - 1}{1-q}
		\\
		&= [\lambda_1-n+\nu_1+1]_q.
	\end{align*}
	This gives the first statement of \eqref{eq:wtcomparisoncases}.
	\eqref{eq:wtcomparisongen} for $i=1$ gives the second statement of \eqref{eq:wtcomparisoncases}.
\end{proof}

Now we can finish the proof of \autoref{th:rooktableauformula}.
	\ytableausetup{centertableaux}
	Denote by $R'_k(\lambda;q)$ the right hand side of \eqref{eq:R_k=sumwt}. To show that $R'_k(\lambda;q) = R_k(\lambda;q)$, we show that $R'_k(\lambda;q)$ satisfies the determining recursions from \S\ref{sec:recforqrook}.
	
	Let $\pi' \in \DD_{n-1}$ be the path obtained by removing the first row of $\pi$. Then  $\lambda(\pi') = \widetilde{\lambda}$ is obtained by removing the first row of $\lambda(\pi)$.  Then by \autoref{lem:wtcomparison}, \begin{align*}
		&R'_k(\lambda;q) 
		\\
		&= \sum_{\substack{\nu \vdash n-1 \\ \nu_1 = n-k-1}} q^{\nu_1 - (n-1-\lambda_1)} \sum_{T \in \SYT^{\pi'}_\nu} \wt(T;q) + \sum_{\substack{\nu \vdash n-1 \\ \nu_1 = n-k}} [\lambda_1-n+\nu_1+1]_q \sum_{T \in \SYT^{\pi'}_\nu} \wt(T:q)
		\\
		&=  q^{n-k-1 - (n-1-\lambda_1)} R'_{k}(\widetilde{\lambda};q) +  [\lambda_1-n+n-k+1]_q R'_{k-1}(\widetilde{\lambda};q)
		\\
		&= q^{\lambda_1-k} R'_{k}(\widetilde{\lambda};q) +  [\lambda_1-k+1]_q R'_{k-1}(\widetilde{\lambda};q),
	\end{align*} which matches with the Garsia-Remmel recursions \eqref{eq:GarsiaRemmelrec}. When $k=0$, the sum in the right hand side of \eqref{eq:R_k=sumwt} only runs over the partition $\mu = (n)$. There is only one tableau in $\SYT^\pi_{(n)}$, which is $ T = \ytableaushort{1\ldots n}$\,. Since $\# \Area(\pi) = \binom{n}{2} - |\lambda| = n((n)') - |\lambda|$, then $\wt(T;q) = q^{n((n)')-\# \Area(\pi)} = q^{|\lambda|}$. This proves that $R'_0(\lambda;q) = q^{|\lambda|}$. Thus $R'_k(\lambda;q)$ satisfies the recursions with the initial conditions, hence $R'_k(\lambda;q) = R_k(\lambda;q)$.

\section{Unicellular LLT functions}

In this section we recall the definition and some basic properties and examples of unicellular LLT functions, following the exposition in \cite{Carlsson-Mellit-Shuffle}. We follow standard notations and conventions regarding symmetric functions and plethysm. In particular, for a symmetric function $f$, $f[X]$ denotes $f(x_1,x_2,\ldots)$.

\subsection{Dyck path symmetric functions}

Let $\pi \in \DD_n$. For a word $w = (w_1,\ldots,w_n) \in \ZZ_{>0}^n$, let $$ \inv(\pi,w) = \#\{(i,j)\in \Area(\pi)\,|\, w_i>w_j\}.$$ The unicellular LLT symmetric function corresponding to $\pi$ is a symmetric function denoted $\chi_\pi(q)$, defined by $$ \chi_\pi(q)[X] = \sum_{w \in \ZZ_{>0}^n}^{} q^{\inv(\pi,w)} x_w,$$ where for a word $w$ as above, let $x_w = \prod_{i} x_{w_i}$. A proof of symmetry of $\chi_\pi(q)$ can be found in \cite[Proposition 3.2]{Carlsson-Mellit-Shuffle}. \cite[Remark 3.6]{Carlsson-Mellit-Shuffle} also explains the connection with the `usual' unicellular LLT functions.

The maximum value of $\inv(\pi,w)$ for $w\in \ZZ_{>0}^n$ is obtained when all boxes in $\Area(\pi)$ contributes $1$, in which case it equals to $\#\Area(\pi)$. This means the highest power of $q$ in $\chi_\pi(q)$ is $\#\Area(\pi)$. The reverse polynomial is denoted $\widetilde{\chi}_\pi(q)$, defined by \begin{equation}\label{eq:tildechidef}
	\widetilde{\chi}_\pi(q) = q^{\# \Area(\pi)} \chi_\pi(q^{-1}).
\end{equation} 

\subsection{Examples}
Let $n \in \ZZ_{>0}$. 
\begin{enumerate}
	\item For $\pi \in \DD_n$, $$ \chi_\pi(1) = h_1^n.$$
	\item Suppose $\pi \in \DD_n$ and $\pi$ does not touch the diagonal line from $(0,0)$ to $(n,n)$ except at the two ends. Then $$\chi_\pi(0)[X] = \sum_{\substack{w \in \ZZ_{>0}^n \\ w_1 \leq \ldots \leq w_n}} x_w = h_n[X].$$ Hence, if $\pi \in \DD$ meets the line $x=y$ at points $(\alpha_1+\ldots+\alpha_i,\alpha_1+\ldots+\alpha_i)$ for $i \in \ZZ_{>0}$ for some composition $\alpha$, then, $$ \chi_\pi(0) = h_\alpha.$$
	\item Let $\pi = (NE)^n \in \DD_n$. Then $\Area(\pi) = \emptyset$ and so $$ \chi_{(NE)^n}(q) = e_1^n.$$ 
	\item Let $\pi = N^nE^n \in \DD_n$. Then $\Area(\pi) = \{(i,j)\,|\, 1 \leq i < j \leq  n\}$ is the maximum possible. Then for any word $w \in \ZZ_{>0}^n$, $\inv(\pi,w) = \inv(w)$ is the usual number of inversions of the word, and since $\inv$ is Mahonian (\cite[Theorem 1.3]{HaglundqtCatbook}), $$ \chi_{N^nE^n}(q) = \sum_{\mu \vdash n} {n \brack \mu}_q m_\mu = W_{(n)}(q),$$ where the right hand side denotes the $q$-Whittaker functions.
	
	\item Let $\rev : \DD \to \DD$ be the map that a takes a Dyck path to its reverse, i.e, the path obtained by reading the Dyck path from right to left and interchanging the $N$ and $E$ steps. 
	\cite[Proposition 3.3]{Carlsson-Mellit-Shuffle} says
	\begin{equation}
		\chi_\pi(q) = \chi_{\rev(\pi)}(q).
	\end{equation}
	
	\item Suppose $\pi,\eta \in \DD$ and let $\pi \cdot \eta$ denote their concatenation, then $$ \chi_{\pi \cdot \eta}(q) = \chi_\pi(q) \cdot \chi_\eta(q).$$
\end{enumerate}

\subsection{chromatic quasisymmetric functions}

For a Dyck path $\pi \in \DD_n$, let $X_\pi(q)$ be the chromatic quasisymmetric function of the graph with vertex set $[n]$ and edge set $$ \bigg\{\{i,j\}\,|\, 1 \leq i<j \leq n \hbox{ and } (i,j) \in \Area(\pi)\bigg\}.$$ The chromatic quasisymmetric function $X_\pi(q)$ is in fact a symmetric function given by $$ X_\pi(q)[X] = \sum_{\substack{w \in \ZZ_{>0}^n \\ (i,j) \in \Area(\pi) \Rightarrow w_i \neq w_j}} q^{\inv(\pi,w)} x_w. $$
\cite[Proposition 3.5]{Carlsson-Mellit-Shuffle} says \begin{equation}\label{eq:chromaticLLTplethysmrel}
	\chi_\pi(q)[X] = (q-1)^n X_\pi(q)\bigg[ \dfrac{X}{q-1} \bigg].
\end{equation}

\subsection{$\omega$-involution}
\cite[Proposition 3.4]{Carlsson-Mellit-Shuffle} says that \begin{equation}\label{eq:omegachipi}
	\omega(\chi_\pi(q)) = q^{\# \Area(\pi)} \chi_\pi(q^{-1}) = \widetilde{\chi}_\pi(q).
\end{equation}

\section{$W$-expansion of unicellular LLT}

In this section, we connect our formula for $q$-rook numbers with the results of \cite{GMRWW_Macexpansion}, showing in \autoref{th:equalitywithVienna} that our tableaux weights are essentially the same as certain specializations of the tableaux weights from \cite{GMRWW_Macexpansion}, thus they give the coefficients of unicellular LLT functions in the $q$-Whittaker basis. Therefore, it follows in \autoref{prop:Rkwithcpimu} that the $q$-rook numbers are sum of $W$-coefficients (for a fixed first row length) of the unicellular LLTs.

\subsection{}Let $\widetilde{H}_\lambda(q,t)$ for $\lambda \in \Par$ be the modified Macdonald functions, with notations same as \cite{HHL-I}. The $q$-Whittaker functions are \cite{BergeronqWhittaker} $$ W_\lambda(q) = q^{n(\lambda')}\omega\widetilde{H}_{\lambda}(q^{-1},0) \qquad \hbox{ for } \lambda \in \Par.$$ 
Let \begin{equation}\label{eq:tildeWdef}
	\widetilde{W}_\lambda(q) = q^{n(\lambda')}W_\lambda(q^{-1}),
\end{equation}
then \begin{equation}\label{eq:omegatildeH=tildeW}
	\omega(\widetilde{H}_\lambda(q,0)) = \widetilde{W}_\lambda(q).
\end{equation} 
The first equation of \S4 of \cite{GMRWW_Macexpansion} says 
\begin{equation}\label{eq:QprimeplethysmtotildeH}
	Q_{\lambda'}(q^{-1})\bigg[\dfrac{X}{q-1}\bigg] = q^{-|\lambda|}q^{-n(\lambda')}\widetilde{H}_\lambda(q,0)[X],
\end{equation} where $Q_{\lambda'}(q)$ is the same as in \cite[Chapter III]{MacMainBook}.

\subsection{}For $\pi \in \DD_n$ and partitions $\mu \vdash n$, let $c_{\pi,\mu}(q) \in \QQ(q)$ be defined by \begin{align}\label{eq:cpimudef}
	\chi_\pi(q) = \sum_{\mu \vdash n} (1-q)^{n-\mu_1} c_{\pi,\mu}(q) W_\mu(q).
\end{align}
Let \begin{equation}\label{eq:tildectoc}
	\widetilde{c}_{\pi,\mu}(q) = q^{\# \Area(\pi) - n(\mu')}c_{\pi,\mu}(q^{-1}).
\end{equation} Recall from \eqref{eq:tildechidef} and \eqref{eq:tildeWdef} that $$ \widetilde{\chi}_\pi(q) = q^{\# \Area(\pi)} \chi_\pi(q^{-1}) \qquad \hbox{ and } \qquad	\widetilde{W}_\lambda(q) = q^{n(\lambda')}W_\lambda(q^{-1}).$$  Then \begin{equation}
\widetilde{\chi}_\pi(q) = \sum_{\mu \vdash n}^{} (1-q^{-1})^{n-\mu_1} \widetilde{c}_{\pi,\mu}(q) \widetilde{W}_\mu(q),
\end{equation}  
or applying $\omega$, using \eqref{eq:omegachipi} and \eqref{eq:omegatildeH=tildeW} and the fact that $\omega$ is an involution, \begin{equation}\label{eq:tildecastildeHcoeff}
	\chi_\pi(q) = \sum_{\mu \vdash n}^{} (1-q^{-1})^{n-\mu_1} \widetilde{c}_{\pi,\mu}(q) \widetilde{H}_\mu(q,0).
\end{equation} 

\begin{proposition}\label{th:equalitywithVienna} 
	For $\pi \in \DD_n$ and $\mu \vdash n$,
	\begin{align}
		c_{\pi,\mu}(q) 
		&= q^{-n(\mu')+\#\Area(\pi)} \cdot \sum_{T \in \SYT_\mu^\pi} \wt(T;q) 
		\nonumber
		\\
		&= \sum_{T \in \SYT^\pi_\mu}  
		q^{\gamma(T)} \prod_{\substack{b \in \mu \\ \coleg(b)>0}} [\arm_{<_\pi T(b)}(\up(b))+1]_q.
	\end{align} In particular, $c_{\pi,\mu}(q) \in \ZZ_{\geq 0}[q]$.
\end{proposition}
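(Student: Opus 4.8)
The plan is to connect $\chi_\pi(q)$ expanded in the $\widetilde{H}_\mu(q,0)$ basis — which by \eqref{eq:tildecastildeHcoeff} has coefficients $(1-q^{-1})^{n-\mu_1}\widetilde{c}_{\pi,\mu}(q)$ — with the tableaux expansion supplied by \cite{GMRWW_Macexpansion}. First I would recall precisely the main theorem of \cite{GMRWW_Macexpansion}: it expresses (a plethystically transformed) LLT polynomial, or equivalently $\chi_\pi(q)$ via \eqref{eq:chromaticLLTplethysmrel} and \eqref{eq:QprimeplethysmtotildeH}, in the Hall–Littlewood (or $\widetilde H(q,0)$) basis as a sum over certain fillings, with an explicit combinatorial weight. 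The strategy is \emph{not} to reprove that expansion but to exhibit a weight-preserving bijection between the fillings of \cite{GMRWW_Macexpansion} that contribute to the $\mu$-coefficient and the set $\SYT^\pi_\mu$, matching their weight (after the $q\mapsto q^{-1}$, $q^{\#\Area(\pi)-n(\mu')}$ normalization recorded in \eqref{eq:tildectoc}) with $\wt(T;q^{-1})$ up to the stated prefactor. Concretely: show that the Vienna-school tableaux indexing the $\widetilde H_\mu(q,0)$-coefficient are, up to standardization/reordering within rows, the same objects as $\pi$-standard Young tableaux of shape $\mu$, that the column-strictness condition relative to $\pi$ becomes the condition $T(\up(b)) <_\pi T(b)$, and that their statistic decomposes as a "$\gamma$"-type inversion count plus a product of $q$-brackets $[\arm_{<_\pi T(b)}(\up(b))+1]_q$ coming from the freedom of placing equal entries within a row subject to the $\pi$-order — exactly the shape of $\wt$ in \eqref{eq:wtdef}.

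The key steps, in order, are: (i) rewrite the left side of \cite{GMRWW_Macexpansion}'s identity in terms of $\chi_\pi(q)$ using \eqref{eq:chromaticLLTplethysmrel} and \eqref{eq:QprimeplethysmtotildeH}, so that both sides are expansions of $\chi_\pi(q)$ in $\{\widetilde H_\mu(q,0)\}$; (ii) by linear independence of the $\widetilde H_\mu(q,0)$, equate coefficients of $\widetilde H_\mu(q,0)$, identifying $(1-q^{-1})^{n-\mu_1}\widetilde c_{\pi,\mu}(q)$ with the generating polynomial of the \cite{GMRWW_Macexpansion} fillings of content-shape $\mu$; (iii) extract the factor $(1-q^{-1})^{n-\mu_1} = \prod(\text{something})$ — in \cite{GMRWW_Macexpansion} this comes from summing a geometric-type series over the rows below the first, one factor $(1-q^{-1})$ or $[\cdot]_{q^{-1}}$ per box with $\coleg>0$ — and match the leftover to a product $\prod_{b:\coleg(b)>0}[\arm_{<_\pi T(b)}(\up(b))+1]_{q^{-1}}$; (iv) check that the residual power of $q$ on the \cite{GMRWW_Macexpansion} side, after the normalization in \eqref{eq:tildectoc}, is exactly $\gamma(T)$, i.e. counts pairs $(b,c)$ with $\coleg(b)>\coleg(c)$ and $(T(c),T(b))\in\Area(\pi)$; (v) finally undo the $q\mapsto q^{-1}$ and the prefactors $q^{n(\mu')}$, $q^{\#\Area(\pi)}$ via \eqref{eq:tildectoc}, \eqref{eq:wtdef} to land on the displayed formula, and read off $c_{\pi,\mu}(q)\in\ZZ_{\geq 0}[q]$ since it is a nonnegative-integer-coefficient sum of products of $q$-brackets.

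Alternatively — and this may be the cleaner route given everything already set up in the paper — I would bypass \cite{GMRWW_Macexpansion}'s tableaux entirely and instead verify \eqref{eq:tildecastildeHcoeff} directly by a recursion: \autoref{lem:wtcomparison} already shows $\sum_{T\in\SYT^\pi_\mu}\wt(T;q)$ satisfies, under removal of the rightmost $NE$ of $\pi$, a row-peeling recursion; one can show $c_{\pi,\mu}(q)$ satisfies the matching recursion because $\chi_{\pi}(q)=\chi_{\pi'}(q)\cdot e_1$ when $\pi$ is obtained from $\pi'$ by appending $NE$ at the end — wait, that is the concatenation identity only in a restricted case, so more carefully one uses the effect of adding an initial $N$ and a matched $E$ on the $W$-expansion, tracking how multiplication by the relevant operator acts on $W_\mu(q)$ (via Pieri-type rules for $q$-Whittaker functions) and comparing to the combinatorial recursion. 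The main obstacle I anticipate is step (iii)–(iv): correctly identifying, in the notation of \cite{GMRWW_Macexpansion}, exactly which geometric sums produce the $(1-q^{-1})^{n-\mu_1}$ prefactor versus which produce the $[\arm_{<_\pi T(b)}(\up(b))+1]_q$ factors, and confirming that their residual inversion statistic specializes to $\gamma(T)$ — this is a bookkeeping-heavy reconciliation of two differently-presented combinatorial models, and getting the $q$-powers to agree on the nose (rather than up to an overall shift) is where errors would most likely creep in. The nonnegativity conclusion is then immediate from the final closed form.
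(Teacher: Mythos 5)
Your proposal takes essentially the same route as the paper: the result is deduced from \cite[Theorem 4.1]{GMRWW_Macexpansion} by rewriting their expansion in terms of $\chi_\pi(q)$ and $\widetilde{H}_\mu(q,0)$ via \eqref{eq:chromaticLLTplethysmrel} and \eqref{eq:QprimeplethysmtotildeH}, equating coefficients with \eqref{eq:tildecastildeHcoeff}, and then matching their tableau weight (after $q\mapsto q^{-1}$ and the normalization \eqref{eq:tildectoc}) with $\wt(T;q)$, identifying their statistics $d(T,i)$, $L(T,i)$, $m(T,i)$ with $\arm_{<_\pi i}(\up(\cdot))+1$ and $\gamma(T,\cdot)$. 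The only minor imprecision is in your step (iii): the factor $(q-1)^{n-\mu_1}$ falls out directly from the $(q-1)^{-\mu_1}$ in their formula combined with the $(q-1)^n$ from the plethysm, rather than from resumming geometric series over rows.
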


\begin{proof}
	The proposition is just a restatement of \cite[Theorem 4.1]{GMRWW_Macexpansion}. We now explain the changes required from their notations to our notation. Note that their $\chi_\pi$ is our $X_\pi$ and our $\chi_\pi$ is their $F_\pi$
	
	First, since $\chi_\pi(q) = \chi_{\rev(\pi)}(q)$, we have changed their order $\ll$ to our $<_\pi$, where in the notation of \cite{GMRWW_Macexpansion}, for $1 \leq i < j \leq n$, $i \ll j$ if $(i,j) \notin \Area(\rev(\pi))$ and in our notation $i<_\pi j$ if $(i,j) \in \Area(\pi)$.    
	
	In \cite{GMRWW_Macexpansion}, the authors use another partial order $\prec$ on $[n]$, defined by $i \prec j$ if $(i,j) \in \Area(\rev(\pi))$, which with our conventions then translate to $i \prec j$ if $(i,j) \in \Area(\pi)$.
	
	For a tableau $T \in \SYT^\pi_\mu$, let $T_1$ denote the fillings in the first row of $T$, for $1 \leq i \leq n$, let $T_{\prec i}$ be the skew shape with fillings $\prec i$, and $T_{<i}$ be the skew shape with fillings $<i$. Let $i$ appear in row $s$ of $T$ and $d(T,i) = \mathrm{sh}(T_{<i})_s - \mathrm{sh}(T_{<i})_{s+1}$, $L(T,i)$ is the coarm of the leftmost element in row $s$ of $T_{\prec i}$, $m(T,i)$ is the number of elements that are atleast $2$ rows above the box with filling $\prec i$.  
	
	Let $$ \widetilde{\wt}(T;q) = \prod_{i \notin T_1} q^{-m(T,i)-d(T,i)} \prod_{\substack{i \notin T_1 \\ (T_{\prec i})_s = \emptyset}} [d(T,i)]_q \prod_{i: (T_{\prec i})_s \neq \emptyset} [L(T,i) - \mathrm{sh}(T_{<i})_{s+1}]_q, $$ where the notations assume that $i$ appears in row $s+1$ of $T$. Then \cite[Theorem 4.1]{GMRWW_Macexpansion} says that \begin{align*}
		X_\pi(q) = \sum_{\mu \vdash n} \bigg(\sum_{T \in \SYT^\pi_\mu} \widetilde{\wt}(T;q)\bigg) \dfrac{q^{\binom{n+1}{2}-|\lambda(\pi)|}}{(q-1)^{\mu_1}} Q_{\mu'}(q^{-1}).
	\end{align*}
	Note that $|\lambda(\pi)| = \binom{n}{2} - \#\Area(\pi)$. Then using \eqref{eq:chromaticLLTplethysmrel} and \eqref{eq:QprimeplethysmtotildeH},
	\begin{align*}
		\chi_\pi(q) &= \sum_{\mu \vdash n} \bigg(\sum_{T \in \SYT^\pi_\mu} \widetilde{\wt}(T;q)\bigg) \dfrac{q^{n+\#\Area(\pi)}}{(q-1)^{\mu_1}} \cdot (q-1)^n Q_{\mu'}(q^{-1})\bigg[\dfrac{X}{q-1}\bigg]
		\\
		&= \sum_{\mu \vdash n} \bigg(\sum_{T \in \SYT^\pi_\mu} \widetilde{\wt}(T;q)\bigg) q^{n+\#\Area(\pi)}\cdot (q-1)^{n-\mu_1}Q_{\mu'}(q^{-1})\bigg[\dfrac{X}{q-1}\bigg]
		\\
		&= \sum_{\mu \vdash n} \bigg(\sum_{T \in \SYT^\pi_\mu} \widetilde{\wt}(T;q)\bigg) q^{n+\#\Area(\pi)}\cdot (q-1)^{n-\mu_1}q^{-n}q^{-n(\mu')}\widetilde{H}_\mu(q,0)[X] 
		\\
		&= \sum_{\mu \vdash n} \bigg(\sum_{T \in \SYT^\pi_\mu} \widetilde{\wt}(T;q)\bigg) q^{\#\Area(\pi)-n(\mu')} \cdot q^{n-\mu_1}(1-q^{-1})^{n-\mu_1}\widetilde{H}_{\mu}(q,0)[X].
	\end{align*}
Comparing with \eqref{eq:tildecastildeHcoeff},
\begin{align*}
	\widetilde{c}_{\pi,\mu}(q) = q^{\# \Area(\pi) - n(\mu')+ n - \mu_1} \sum_{T \in \SYT^\pi_\mu} \widetilde{\wt}(T;q),
\end{align*}
or, using \eqref{eq:tildectoc}, \begin{equation}\label{eq:castildewt}
	c_{\pi,\mu}(q) = q^{-n+\mu_1} \sum_{T \in \SYT^\pi_\mu} \widetilde{\wt}(T;q^{-1}).
\end{equation}

Suppose that $i$ appears in row $s+1$ in the box $T^{(i)}$. If $(T_{\prec i})_s = \emptyset$ then $d(T,i)$ is 1+the number of boxes $c$ in row $s$ in the arm of $\up(T^{(i)})$ whose value $T(c) < i$. Since $T(c) \nprec i$, this means $T(c) <_\pi i$. On the other hand, if $T(c) <_\pi i$, then $T(c) <i $ and $T(c) \nprec i$. So $$ d(T,i) = \arm_{<_\pi i}(\up(T^{(i)}))+1, \qquad\hbox{ if } (T_{\prec i})_s = \emptyset.$$

Suppose $(T_{\prec i})_{s} \neq \emptyset$. If $j \prec i$ then $(j,i) \in \Area(\pi)$ and if $k \geq j$, then $(k,i)$ has to be below the path as well, so $k \nless_\pi i$.    Then $L(T,i) - \mathrm{coarm}(T^{(i)})$ is 1+the number of boxes $c$ in the arm of $\up(T^{(i)})$ such that $T(c) <_\pi i$. So $$ L(T,i) - \mathrm{sh}(T_{<i})_{s+1} = \arm_{<_\pi i}(\up(T^{(i)}))+1, \qquad\hbox{ if } (T_{\prec i})_{s} \neq \emptyset.$$

Then $$ \widetilde{\wt}(T;q) = \prod_{i \notin T_1} q^{-m(T,i)-d(T,i)} \prod_{i \notin T_1} [\arm_{<_\pi i}(\up(T^{(i)}))+1 ]_q $$ 

Note that
\begin{align*}
	d(T,i) = \arm_{<i}(\up(T^{(i)}))+1
\end{align*}
and \begin{align*}
	\widetilde{\wt}(T;q^{-1}) &= 
	\prod_{i \notin T_1} q^{m(T,i)+d(T,i)} [\arm_{<_\pi i}(\up(T^{(i)}))+1]_{q^{-1}} 
	\\
	&= \prod_{i \notin T_1} q^{m(T,i)+\arm_{<i}(\up(T^{(i)}))+1-\arm_{<_\pi i}(\up(T^{(i)}))}[\arm_{<_\pi i}(\up(T^{(i)}))+1]_{q},
\end{align*}
and $\arm_{<i}(\up(T^{(i)}))-\arm_{<_\pi i}(\up(T^{(i)}))$ is the number of boxes in row $s$ with fillings $\prec i$. So $m(T,i)+\arm_{<i}(\up(T^{(i)}))-\arm_{<_\pi i}(\up(T^{(i)}))$ is the number of boxes with fillings $\prec i$ which are above $i$ (such a filling must occur to the right of $i$). Then $m(T,i)+\arm_{<i}(\up(T^{(i)}))-\arm_{<_\pi i}(\up(T^{(i)})) = \gamma(T,T^{(i)})$.

Then comparing with \eqref{eq:wtdef}, $$ \widetilde{\wt}(T;q^{-1}) = q^{n-\mu_1} \prod_{i\notin T_1} q^{\gamma(T,T^{(i)})} [\arm_{<_\pi i}(\up(T^{(i)}))+1]_{q} = q^{n-\mu_1-n(\mu')+\# \Area(\pi)} \wt(T;q). $$ Then by \eqref{eq:castildewt},  $$  c_{\pi,\mu}(q) = q^{-n(\mu')+\#\Area(\pi)} \cdot \sum_{T \in \SYT_\mu^\pi} \wt(T;q).$$

\end{proof}

\begin{corollary}\label{prop:Rkwithcpimu}
	Let $\pi \in \DD_n$ with $\lambda(\pi) = \lambda$. Suppose \begin{align*}
		\chi_\pi(q) = \sum_{\mu \vdash n} (1-q)^{n-\mu_1} c_{\pi,\mu}(q) W_\mu(q) \qquad \hbox{ and } \qquad \widetilde{\chi}_\pi(q) = \sum_{\mu \vdash n}^{} (1-q^{-1})^{n-\mu_1} \widetilde{c}_{\pi,\mu}(q) \widetilde{W}_\mu(q).
	\end{align*}
	Then
	\begin{align*}
R_k(\lambda;q) =		\sum_{\substack{\mu \vdash n \\ \mu_1 = n-k}} q^{n(\mu')-\# \Area(\pi)} c_{\pi,\mu}(q) \qquad \hbox{ and } \qquad R_k(\lambda;q^{-1}) = \sum_{\substack{\mu \vdash n \\ \mu_1 = n-k}} \widetilde{c}_{\pi,\mu}(q).
	\end{align*}
\end{corollary}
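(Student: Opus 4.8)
The plan is to obtain this corollary as an immediate consequence of two results already established: the tableaux formula of \autoref{th:rooktableauformula} and the identification of the $W$-expansion coefficients in \autoref{th:equalitywithVienna}. No new combinatorics is needed; the whole point is to match the two expressions term by term, summand $\mu$ against summand $\mu$.

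First I would recall from \autoref{th:rooktableauformula} that, with $\lambda = \lambda(\pi)$,
\begin{equation*}
	R_k(\lambda;q) = \sum_{\substack{\mu \vdash n \\ \mu_1 = n-k}} \sum_{T \in \SYT^\pi_\mu} \wt(T;q).
\end{equation*}
On the other hand, \autoref{th:equalitywithVienna} gives, for each $\mu \vdash n$, the equality $c_{\pi,\mu}(q) = q^{-n(\mu')+\#\Area(\pi)} \sum_{T \in \SYT^\pi_\mu} \wt(T;q)$, which we rewrite as $\sum_{T \in \SYT^\pi_\mu} \wt(T;q) = q^{n(\mu')-\#\Area(\pi)} c_{\pi,\mu}(q)$. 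Substituting this into the displayed formula for $R_k(\lambda;q)$ yields the first claimed identity
\begin{equation*}
	R_k(\lambda;q) = \sum_{\substack{\mu \vdash n \\ \mu_1 = n-k}} q^{n(\mu')-\#\Area(\pi)} c_{\pi,\mu}(q).
\end{equation*}

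For the second identity I would substitute $q \mapsto q^{-1}$ into the first one and then invoke the definition \eqref{eq:tildectoc}, namely $\widetilde{c}_{\pi,\mu}(q) = q^{\#\Area(\pi)-n(\mu')} c_{\pi,\mu}(q^{-1})$, to get
\begin{equation*}
	R_k(\lambda;q^{-1}) = \sum_{\substack{\mu \vdash n \\ \mu_1 = n-k}} q^{-n(\mu')+\#\Area(\pi)} c_{\pi,\mu}(q^{-1}) = \sum_{\substack{\mu \vdash n \\ \mu_1 = n-k}} \widetilde{c}_{\pi,\mu}(q),
\end{equation*}
as required. The only point meriting a word of care is that the index set of partitions is identical on both sides (those $\mu \vdash n$ with $\mu_1 = n-k$, equivalently $|\mu'| - \mu'_1$ fixed), so the matching is literally term by term; I do not anticipate any genuine obstacle here, since all the substantive content has already been absorbed into \autoref{th:rooktableauformula} and \autoref{th:equalitywithVienna}.
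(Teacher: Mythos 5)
Your proposal is correct and is exactly the intended argument: the paper states this as an immediate corollary of \autoref{th:rooktableauformula} combined with \autoref{th:equalitywithVienna}, with the second identity following from the substitution $q \mapsto q^{-1}$ and the definition \eqref{eq:tildectoc} just as you write. No discrepancies.
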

The recent paper \cite{kim2025halllittlewoodexpansionschromaticquasisymmetric} obtains another proof of \autoref{prop:Rkwithcpimu}.

\section{Abelian Dyck paths}

In this section we first provide a condition for which partitions appear in \eqref{eq:R_k=sumwt}. Then we focus our attention to Abelian Dyck paths, which are paths $\pi$ such that if $\lambda = \lambda(\pi)$ then $|\pi|\geq \lambda_1 + \lambda'_1$. We show that in the case of Abelian Dyck paths the sum in \eqref{eq:R_k=sumwt} only runs over a single partition. We then provide another proof of a result of Guay-Paquet that says in the case of Abelian Dyck paths, the unicellular LLT functions are sum of unicellular LLT functions with rectangle shapes, where the coefficients are given by certain $q$-hit numbers, which are closely related with the $q$-rook numbers.

\subsection{A condition for $\SYT^\pi_\mu \neq \emptyset$}
Recall that a subset of a poset $P$ is a chain if any two elements are comparable, and it is an anti-chain if any two distinct elements are incomporable.

For $\pi \in \DD_n$ let $P(\pi)\vdash n$ denote the Greene shape of the poset determined by $<_\pi$ on $[n]$, i.e, $P(\pi)_{1}+\ldots+P(\pi)_k$ is the maximum number of elements in a union of $k$ anti-chains in $[n]$ with respect to $<_\pi$. By \cite[Theorem 1.5]{Greene_somepartitions}, $P(\pi)'_1+\ldots+P(\pi)'_k$ is the maximum number of elements in a union of $k$ chains in $[n]$ 
with respect to $<_\pi$, where $P(\pi)'$ denotes the conjugate partition of $P(\pi)$.

For the path $\pi$ from \autoref{fig:pi}, $P(\pi) = (3,2,1)$. For example, the sets $\{1,2,3\}, \{4,5\}, \{6\}$ are antichains of length $3,2,1$ respectively, and the sets $\{1,4,6\}, \{2,5\}$ and $\{3\}$ are chains of length $3,2,1$ respectively.

\begin{lemma}\label{prop:piSYTsupport}
	Let $\pi \in \DD_n$ and $\mu \vdash n$ be such that $\SYT^\pi_\mu \neq \emptyset$. Then $\mu \geq P(\pi)$ in the dominance order.
\end{lemma}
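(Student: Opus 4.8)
The plan is to connect the combinatorial condition defining $\SYT^\pi_\mu$ with Greene's theorem on chains and antichains. Recall that $P(\pi)'_1 + \ldots + P(\pi)'_k$ is the maximum size of a union of $k$ chains in $([n], <_\pi)$. So to show $\mu \geq P(\pi)$ in dominance order, it is equivalent (by passing to conjugates and using the fact that dominance is reversed under conjugation) to show $\mu' \leq P(\pi)'$, i.e., that for each $k$, the first $k$ columns of $\mu$ contain at most $P(\pi)'_1 + \ldots + P(\pi)'_k$ boxes — or more directly, it suffices to produce, from any $T \in \SYT^\pi_\mu$, a decomposition of $[n]$ witnessing that the columns of $\mu$ give chains.

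\medskip

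Here is the concrete step. Fix $T \in \SYT^\pi_\mu$. For each column index $j$, let $C_j \subseteq [n]$ be the set of entries of $T$ lying in column $j$. I claim each $C_j$ is a chain in $([n], <_\pi)$. Indeed, if $b$ is directly above $b'$ in column $j$ (i.e. $b = \up(b')$ in the same column, possibly with boxes in between), then by definition of $\SYT^\pi_\mu$ we have $T(\up(b')) <_\pi T(b')$ for each covering pair, and by the transitivity-type observation recorded just after the poset definition — namely that $i <_\pi j \leq k$ with $i < j$ implies $i <_\pi k$ — these covering relations compose to give $T(b) <_\pi T(b')$ for all boxes $b$ above $b'$ in the same column. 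Hence $C_j$ is totally ordered under $<_\pi$, i.e. a chain. The columns of $\mu$ have sizes $\mu'_1 \geq \mu'_2 \geq \ldots$, so $C_1, \ldots, C_k$ is a union of $k$ disjoint chains covering $\mu'_1 + \ldots + \mu'_k$ elements of $[n]$. By Greene's theorem this quantity is at most $P(\pi)'_1 + \ldots + P(\pi)'_k$. Since this holds for all $k$, we get $\mu' \leq P(\pi)'$ in dominance order, hence $\mu \geq P(\pi)$.

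\medskip

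The one technical point to verify carefully is that the composition of covering relations really does yield $<_\pi$-comparability of non-adjacent entries in a column: this needs the specific transitivity property of $<_\pi$ quoted in the excerpt (if $i <_\pi j$ and $j \leq k$ then $i <_\pi k$, when $i<j$), rather than genuine transitivity of the relation $<_\pi$ as an abstract order — so I would state and apply that property explicitly. Everything else (the duality between dominance order and conjugation, and Greene's theorem) is either standard or already cited in the excerpt, so I anticipate the main obstacle is simply bookkeeping the chain/antichain duality correctly; there is no deep difficulty.
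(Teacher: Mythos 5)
Your proof is correct and follows essentially the same route as the paper's: the columns of any $T \in \SYT^\pi_\mu$ form disjoint chains in $([n],<_\pi)$, so Greene's theorem bounds $\mu'_1+\cdots+\mu'_k$ by $P(\pi)'_1+\cdots+P(\pi)'_k$, and conjugation reverses dominance. Your extra care in verifying that the covering relations compose (via the observation that $i<_\pi j\leq k$ implies $i<_\pi k$) is a detail the paper leaves implicit, but it is exactly the right justification.
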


\begin{proof}
	Since entries in each column increase according to $<_\pi$, the entries in each column is a chain in $[n]$, therefore $\mu'_1+\ldots+\mu'_k \leq$ the maximum number of elements in a union of $k$ chains with respect to $<_\pi$ $=P(\pi)_1'+\ldots+P(\pi)'_k$. So, $\mu' \leq P(\pi)'$, or $\mu \geq P(\pi)$ in dominance order.
\end{proof}

\subsection{Abelian Dyck paths}

For $n,m \in \ZZ_{\geq 0}$, denote by $(m^n)$ the rectangular partition $(m,\ldots,m)$ with $n$ rows with all parts equal to $m$. A partition $\mu \subseteq (m^n)$ if $\mu_1 \leq m$ and $\mu'_1 \leq n$. 

For $\mu \subseteq (m^n)$, denote by $\pi^{n,m}(\mu) \in \DD_{n+m}$ the path with $\lambda(\pi^{n,m}(\mu)) = \mu$.

\begin{proposition}\label{prop:abelianGreene}
	Let $\lambda \in \Par$, $\lambda \subseteq (m^n)$ and $\pi = \pi^{n,m}(\lambda)$. Then $P(\pi)'_1 \leq 2$.
\end{proposition}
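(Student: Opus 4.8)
The plan is to prove the equivalent statement that the poset $([N],<_\pi)$, where $N=n+m=|\pi|$, contains no chain with three elements; by the $k=1$ case of the chain description recalled just before \autoref{prop:piSYTsupport} (\cite[Theorem 1.5]{Greene_somepartitions}), $P(\pi)'_1$ is exactly the size of a largest chain in $([N],<_\pi)$, so this is the same as $P(\pi)'_1\le 2$.

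First I would record the two one-sided bounds hidden in any relation $i<_\pi j$. Write $\lambda=\lambda(\pi)$. Recall from the subsection ``Dyck path to partition'' that $i<_\pi j$ holds precisely when $j>N-\lambda'_i$ (and this already forces $i<j$, since $\lambda(\pi)\subseteq\rho_N$). On the one hand, $j\le N$ forces $\lambda'_i\ge 1$, i.e.\ $i\le\lambda_1$; on the other hand, $\lambda'_i\le\lambda'_1$ gives $j>N-\lambda'_1$. Now the hypothesis $\lambda\subseteq(m^n)$ enters in both of its forms: $\lambda_1\le m$ yields $i\le m$, and $\lambda'_1\le n$ yields $j>N-n=m$, hence $j\ge m+1$. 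In other words, every relation $i<_\pi j$ runs from an element of $\{1,\ldots,m\}$ to an element of $\{m+1,\ldots,N\}$.

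Given this, the proposition is immediate: if $i_1<_\pi i_2<_\pi i_3$ were a chain of length three, then the relation $i_1<_\pi i_2$ would force $i_2\ge m+1$ while $i_2<_\pi i_3$ would force $i_2\le m$, a contradiction. Hence every chain in $([N],<_\pi)$ has at most two elements, i.e.\ $P(\pi)'_1\le 2$.

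I do not expect a real obstacle: the entire argument is the observation that the ``lower halves'' $\{1,\ldots,m\}$ and ``upper halves'' $\{m+1,\ldots,N\}$ of $<_\pi$-relations are disjoint, so the poset has height at most $2$. The only thing needing care is to use $\lambda\subseteq(m^n)$ in both forms — $\lambda_1\le m$ for the smaller index of a relation and $\ell(\lambda)=\lambda'_1\le n$ for the larger one — and to make sure the translation $i<_\pi j\iff j>N-\lambda'_i$ is applied in the right direction.
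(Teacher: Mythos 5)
Your proof is correct and follows essentially the same route as the paper: the paper likewise supposes $i<_\pi j<_\pi k$, uses $\lambda'_1\le n$ to force $j>m$ from the first relation, and then observes that $j>m\ge\lambda_1$ makes the second relation impossible — which is exactly your observation that every $<_\pi$-relation goes from $\{1,\dots,m\}$ to $\{m+1,\dots,N\}$. Your explicit appeal to the $k=1$ case of Greene's theorem to identify $P(\pi)'_1$ with the longest chain is the same step the paper uses implicitly, so nothing further is needed.
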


\begin{proof}
	Suppose $1 \leq i<_\pi j<_\pi k \leq m+n$. Then $i<j<k$ and $(i,j) \notin \Area(\pi)$ and $(j,k) \notin \Area(\pi)$. Now, $(i,j) \notin \Area(\pi)$ implies that $j > m+n-\lambda'_i \geq m+n-n = m$, but then for any $k>j$, $(j,k) \in \Area(\pi)$, a contradiction. Then the maximum length of an chain in $([m+n],<_\pi)$ has to be $\leq 2$. Therefore, \autoref{prop:piSYTsupport} proves the claim.
\end{proof}

\begin{proposition}\label{prop:Rkabelian}
	Let $\lambda \in \Par$ and let $N \geq \lambda_1+\lambda'_1$. Suppose $\pi \in \DD_N$ is such that $\lambda(\pi) = \lambda$. Then \begin{align}\label{eq:Rkabelian}
		R_k(\lambda;q) &= q^{|\lambda|-(N-k)k} c_{\pi,(N-k,k)}(q). 
	\end{align}
\end{proposition}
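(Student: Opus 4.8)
The plan is to deduce \eqref{eq:Rkabelian} from \autoref{prop:Rkwithcpimu} by showing that, under the hypothesis $N\ge\lambda_1+\lambda'_1$, the sum over $\mu$ appearing there collapses to the single term $\mu=(N-k,k)$, and then matching the exponent of $q$.

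First I would record that $\pi$ is an abelian Dyck path. Set $n_0=\lambda'_1=\ell(\lambda)$ and $m_0=N-\lambda'_1$; the hypothesis gives $\lambda_1\le m_0$, hence $\lambda\subseteq(m_0^{\,n_0})$ and $\pi=\pi^{n_0,m_0}(\lambda)$. By \autoref{prop:abelianGreene}, $P(\pi)'_1\le 2$. Consequently, if $\mu\vdash N$ has $\SYT^\pi_\mu\ne\emptyset$, then \autoref{prop:piSYTsupport} gives $\mu\ge P(\pi)$ in dominance order, so $\mu'_1\le P(\pi)'_1\le 2$; that is, $\mu$ has at most two rows. The only partition of $N$ with at most two rows and first part equal to $N-k$ is $(N-k,k)$, and this requires $2k\le N$. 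If instead $2k>N$, then $k>\lfloor N/2\rfloor\ge\min(\lambda_1,\ell(\lambda))$, so $R_k(\lambda;q)=0$, while the right-hand side of \eqref{eq:Rkabelian} is $0$ as well, reading $c_{\pi,(N-k,k)}$ as $0$ since $(N-k,k)$ is then not a partition; so we may assume $2k\le N$. Since the formula for $c_{\pi,\mu}(q)$ in \autoref{prop:Rkwithcpimu} is a sum over $\SYT^\pi_\mu$, it vanishes whenever $\SYT^\pi_\mu=\emptyset$, and therefore
$$R_k(\lambda;q)=q^{\,n((N-k,k)')-\#\Area(\pi)}\,c_{\pi,(N-k,k)}(q).$$

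It remains to check $n((N-k,k)')-\#\Area(\pi)=|\lambda|-(N-k)k$. We have $n((N-k,k)')=\binom{N-k}{2}+\binom{k}{2}$, and, as noted in the proof of \autoref{th:rooktableauformula}, $\#\Area(\pi)=\binom{N}{2}-|\lambda|$. The identity $\binom{N}{2}=\binom{N-k}{2}+\binom{k}{2}+(N-k)k$ then gives the claimed exponent, finishing the proof. I do not expect a real obstacle: the one substantive ingredient is the bound $P(\pi)'_1\le2$ for abelian paths (\autoref{prop:abelianGreene}), which forces the collapse of the sum; everything else is bookkeeping, the only minor care being the degenerate range $2k>N$.
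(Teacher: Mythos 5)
Your proof is correct and takes essentially the same route as the paper: use \autoref{prop:abelianGreene} together with \autoref{prop:piSYTsupport} to force $\mu'_1\le 2$, so the sum in \autoref{prop:Rkwithcpimu} collapses to the single partition $(N-k,k)$, and then match the exponent via $\#\Area(\pi)=\binom{N}{2}-|\lambda|$. Your explicit treatment of the degenerate range $2k>N$ is a small point the paper leaves implicit, but otherwise the arguments coincide.
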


\begin{proof}
	Taking $m \geq \lambda_1$ and $n \geq \lambda'_1$ such that $m+n = N$ in \autoref{prop:abelianGreene}, $P(\pi)'_1 \leq 2$ and if $\SYT^\pi_\mu \neq \emptyset$ then $\mu' \leq P(\pi)'$, thus $\mu'_1 \leq 2$. Then the summands in $R_k(\lambda;q)$ from \eqref{eq:R_k=sumwt} runs over $\mu \vdash N$ with $\mu_1 = N-k$, there is only one possibility of $\mu$, namely, $\mu = (N-k,k)$. 
	Using $\# \Area(\pi) = \binom{N}{2} - |\lambda|$, and \begin{align*}
		n((N-k,k)') - \#\Area(\pi) = \binom{N-k}{2} + \binom{k}{2} - \binom{N}{2} + |\lambda| = |\lambda| - (N-k)k
	\end{align*} in \autoref{prop:Rkwithcpimu} gives the statement.
	
\end{proof}

\subsection{Proof of \cite[Theorem 1.3]{CMP_rook}}
In this subsection we provide another proof of \cite[Theorem 1.3]{CMP_rook}, where it is attributed to Guay-Paquet's unpublished work. It says that for abelian Dyck paths, the unicellular LLT functions are a sum of the corresponding functions for rectangle shaped paths, with coefficients $q$-hit numbers.

Let $\lambda \subseteq (m^n)$ be a partition with $n \leq m$. Recall from \cite[Definition 2.3]{CMP_rook} the $q$-hit numbers of $\lambda$ are defined for $k \in \ZZ_{\geq 0}$, by \begin{align}
	H_k^{m,n}(\lambda;q) = \dfrac{q^{\binom{k}{2}-|\lambda|}}{[m-n]_q!} \sum_{i=k}^{n} R_i(\lambda;q) [m-i]_q! {i \brack k}_q (-1)^{i+k} q^{mi - \binom{i}{2}},
\end{align} and the reverse relation is \cite[(2.3)]{CMP_rook} \begin{equation}\label{eq:RkabwrtH}
	R_k(\lambda;q) = q^{|\lambda|-mk} \dfrac{[m-n]_q!}{[m-k]_q!} \sum_{j=k}^{n} H_j^{m,n}(\lambda;q) {j \brack k}_{q^{-1}}.
\end{equation}

\begin{proposition}[\kern-5pt{\sc\cite[Theorem 1.3]{CMP_rook}}]
	Let $\lambda \subseteq (m^n)$ with $n \leq m$. Let $\pi = \pi(\lambda) = \pi^{n,m}(\lambda) \in \DD_{n+m}$ be the Dyck path such that $\lambda(\pi) = \lambda$, and for $0 \leq j \leq n$, let $\pi(m^j) \in \DD_{n+m}$ be the Dyck paths for which $\lambda(\pi(m^j)) = (m^j)$. Then $$ \chi_{\pi(\lambda)}(q) = \dfrac{[m-n]_q!}{[m]_q!} \sum_{j=0}^{n} H^{m,n}_j(\lambda;q) \cdot \chi_{\pi(m^j)}(q).$$
\end{proposition}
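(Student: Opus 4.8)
The plan is to express both sides in the $q$-Whittaker basis and match coefficients, using \autoref{prop:Rkabelian} together with the known expansion of rectangular LLT functions. First I would recall that for a rectangle-shaped abelian path $\pi(m^j)$, the partitions $\mu$ with $\SYT^{\pi(m^j)}_\mu \neq \emptyset$ satisfy $\mu'_1 \leq 2$ by \autoref{prop:abelianGreene}, so by \autoref{prop:Rkwithcpimu} the $W$-expansion of $\chi_{\pi(m^j)}(q)$ is supported on two-row partitions $(n+m-i, i)$ with coefficients controlled by $R_i((m^j);q)$; but $(m^j)$ is a rectangle, so $R_i((m^j);q)$ is given explicitly by \eqref{eq:rectangleqrook}. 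Similarly $\chi_{\pi(\lambda)}(q)$ is supported on two-row partitions with coefficients $c_{\pi(\lambda),(n+m-i,i)}(q)$, related to $R_i(\lambda;q)$ by \autoref{prop:Rkabelian}. So the identity to prove becomes, after extracting the coefficient of $W_{(n+m-i,i)}(q)$ (equivalently, after using that the $W_\mu$ are linearly independent), a purely scalar identity in $\ZZ_{\geq 0}[q]$ relating $R_i(\lambda;q)$ to a sum $\sum_j H^{m,n}_j(\lambda;q)\, R_i((m^j);q)$ times explicit $q$-power and $q$-factorial prefactors.

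Next I would carry out that scalar identity. Plugging \eqref{eq:rectangleqrook} with $a=j$, $b=m$ for $R_i((m^j);q)$, and using \autoref{prop:Rkabelian} on both $\chi_{\pi(\lambda)}$ and each $\chi_{\pi(m^j)}$ to convert $c$-coefficients back to rook numbers, the claimed LLT identity is equivalent to
\begin{align*}
	q^{|\lambda|-(n+m-i)\cdot 0}\, \text{(stuff)} \cdot R_i(\lambda;q) = \frac{[m-n]_q!}{[m]_q!}\sum_{j=i}^{n} H^{m,n}_j(\lambda;q)\, q^{(j-i)(m-i)}\frac{[j]_q!}{[j-i]_q!}{m\brack i}_q \cdot q^{|(m^j)|-\cdots},
\end{align*}
where I have suppressed the common $q$-power and $q$-binomial bookkeeping. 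After canceling the $i$-dependent factors that appear on both sides (those coming from the fixed two-row shape $(n+m-i,i)$ and from ${m \brack i}_q$), what remains should be exactly the reverse relation \eqref{eq:RkabwrtH} expressing $R_k(\lambda;q)$ as a $q^{-1}$-binomial-weighted sum of the $H^{m,n}_j(\lambda;q)$; converting ${j\brack i}_{q^{-1}}$ to ${j\brack i}_q$ via \eqref{eq:qbinomialreverse} accounts for the remaining $q$-powers. I would present this as: the desired symmetric-function identity holds coefficient-wise in the $W$-basis, and each coefficient identity is \eqref{eq:RkabwrtH}.

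The main obstacle I anticipate is the bookkeeping of $q$-powers: reconciling the $q^{n(\mu')-\#\Area(\pi)}$ normalizations from \autoref{prop:Rkwithcpimu}, the $q^{(a-i)(b-i)}\frac{[a]_q!}{[a-i]_q!}$ from the rectangular rook formula \eqref{eq:rectangleqrook}, the $q^{mi-\binom{i}{2}}$ and $q^{\binom{k}{2}-|\lambda|}$ in the $q$-hit definition, and the $q^{-k(j-k)}$ from \eqref{eq:qbinomialreverse}, all while keeping track of the differing semilengths ($\#\Area(\pi^{n,m}(\mu)) = \binom{n+m}{2} - |\mu|$ for each $\mu$). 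I would handle this by fixing $i$ throughout, writing every quantity with $\pi$ of common semilength $N = n+m$ so that $\#\Area$ differences become $|\lambda|$-vs-$|(m^j)|$ differences, and verifying the exponent identity as a single linear equation in the exponents. A secondary, smaller obstacle is the edge cases in the summation ranges ($j$ from $0$ vs.\ from $i$, and the hypothesis $n \leq m$ ensuring $[m-n]_q!$ and the relevant $q$-binomials make sense and are nonzero), which I would dispatch by noting $R_i((m^j);q) = 0$ for $i > j$ so the sum automatically truncates.
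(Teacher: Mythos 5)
Your proposal is correct and follows essentially the same route as the paper: both expand everything in the $W$-basis using \autoref{prop:Rkabelian} and \eqref{eq:rectangleqrook}, reduce to a coefficient-wise scalar identity for each two-row shape $(m+n-k,k)$, and recognize that identity as the reverse $q$-hit relation \eqref{eq:RkabwrtH} after converting ${j\brack k}_{q^{-1}}$ via \eqref{eq:qbinomialreverse}. The $q$-power bookkeeping you flag as the main obstacle does close up exactly as you predict, including the truncation of the $j$-sum at $j\geq k$ coming from $R_k((m^j);q)=0$ for $k>j$.
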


Note that the version in \cite{CMP_rook} is about chromatic symmetric functions, which is equivalent to the statement above by using \eqref{eq:chromaticLLTplethysmrel}.

\begin{proof}
	
	Using \autoref{prop:Rkabelian} and \eqref{eq:rectangleqrook}, \begin{align}
		c_{\pi((m^j)),(m+n-k,k)}(q) &= q^{-mj+(m+n-k)k}R_k((m^j);q) 
		\nonumber
		\\
		&= q^{-mj+(m+n-k)k} \cdot q^{(j-k)(m-k)} \dfrac{[j]_q!}{[j-k]_q!} {m \brack k}_q
		\nonumber
		\\
		&= q^{(n-j)k} \dfrac{[j]_q!}{[j-k]_q!} {m \brack k}_q.
		\label{eq:cabrectangle}
	\end{align} In particular, $c_{\pi((m^j)),(m+n-k,k)} =0$ if $k>j$. By \eqref{eq:cpimudef} and \autoref{prop:abelianGreene}, \begin{align}\label{eq:chirectangle}
		&\chi_{\pi(m^j)}(q) = \sum_{k=0}^{j} (1-q)^k c_{\pi((m^j)),(m+n-k,k)}(q) W_{(m+n-k,k)}(q). 
	\end{align}
	Using \autoref{prop:Rkabelian}, \eqref{eq:RkabwrtH}, \eqref{eq:qbinomialreverse}, \eqref{eq:qbindef} and \eqref{eq:cabrectangle},	
	\begin{align*}
		c_{\pi(\lambda),(m+n-k,k)}(q) &= q^{-|\lambda|+(m+n-k)k}  R_k(\lambda;q) 
		\\
		&= q^{-|\lambda|+(m+n-k)k}\cdot q^{|\lambda|-mk} \dfrac{[m-n]_q!}{[m-k]_q!} \sum_{j=k}^{n} H_j^{m,n}(\lambda;q) {j \brack k}_{q^{-1}}  
		\\
		&= q^{(n-k)k}\cdot  \dfrac{[m-n]_q!}{[m-k]_q!} \sum_{j=k}^{n} H_j^{m,n}(\lambda;q) {j \brack k}_{q^{-1}}  
		\\
		&= q^{(n-k)k}\cdot  \dfrac{[m-n]_q!}{[m-k]_q!} \sum_{j=k}^{n} H_j^{m,n}(\lambda;q) q^{-k(j-k)} {j \brack k}_q
		\\
		&= q^{(n-k)k}\cdot  \dfrac{[m-n]_q!}{[m-k]_q!} \sum_{j=k}^{n} H_j^{m,n}(\lambda;q) q^{-k(j-k)} \dfrac{[j]_q!}{[k]_q! [j-k]_q!}
		\\
		&= q^{(n-k)k}\cdot  \dfrac{[m-n]_q!}{[m]_q!} \sum_{j=k}^{n} H_j^{m,n}(\lambda;q) q^{-k(j-k)} \dfrac{[j]_q!}{[k]_q! [j-k]_q!} \dfrac{[m]_q!}{[m-k]_q!}
		\\
		&= \dfrac{[m-n]_q!}{[m]_q!} \sum_{j=k}^{n} H_j^{m,n}(\lambda;q) q^{(n-j)k} \dfrac{[j]_q!}{[j-k]_q!} {m \brack k}_q
		\\
		&= \dfrac{[m-n]_q!}{[m]_q!} \sum_{j=k}^{n} H_j^{m,n}(\lambda;q) c_{\pi((m^j)),(m+n-k,k)}(q).
	\end{align*}
	Therefore, using \eqref{eq:chirectangle}, \begin{align*}
		&\chi_{\pi(\lambda)}(q) = \sum_{k=0}^{m+n} (1-q)^{k} c_{\pi(\lambda),(m+n-k,k)} W_{(m+n-k,k)}(q) \\
		&=  \sum_{k=0}^{m+n} (1-q)^{k} \bigg(\dfrac{[m-n]_q!}{[m]_q!} \sum_{j=k}^{n} H_j^{m,n}(\lambda;q) c_{\pi((m^j)),(m+n-k,k)}(q)\bigg) W_{(m+n-k,k)}(q)
		\\
		&=  \sum_{k=0}^{n} (1-q)^{k} \bigg(\dfrac{[m-n]_q!}{[m]_q!} \sum_{j=k}^{n} H_j^{m,n}(\lambda;q) c_{\pi((m^j)),(m+n-k,k)}(q)\bigg) W_{(m+n-k,k)}(q)
		\\
		&= \dfrac{[m-n]_q!}{[m]_q!} \sum_{j = 0}^{n} H_j^{m,n}(\lambda;q) \cdot \sum_{k=0}^{j} (1-q)^k c_{\pi(m^j),(m+n-k,k)}(q) W_{(m+n-k,k)}(q)
		\\
		&= \dfrac{[m-n]_q!}{[m]_q!} \sum_{j=0}^{n} H^{m,n}_j(\lambda;q) \cdot \chi_{\pi(m^j)}(q).
	\end{align*}
	
\end{proof}

\section{$n$th rook numbers from $e$-coefficients}

In this section, we show that if $\lambda \subseteq (n^n)$ is a partition with $n-i+1\leq\lambda_{i}\leq n$ for every $i \in [n]$, then the $n$-th $q$-rook number $R_n(\lambda;q)$ can be obtained from taking sums of coefficients from the $e$-expansion of certain unicellular LLT functions. 

For $\pi \in \DD_n$ and $\mu \vdash n$, define $b_{\pi,\mu}(q) \in \QQ(q)$ by \begin{equation}
	\chi_\pi(q) = \sum_{\mu \vdash n} (q-1)^{n-\ell(\mu)} b_{\pi,\mu}(q) e_\mu.
\end{equation}

A formula for $b_{\pi,\mu}(q)$ is obtained in \cite[Theorem 1.3]{Abreu_Nigro_forests}, where it is also shown that $b_{\pi,\mu}(q) \in \ZZ_{\geq 0}[q]$ for all $\pi,\mu$. We do not need the precise formulas here. The following is our main result in this section.

\begin{proposition}\label{prop:enrook}
	Let $\pi \in \DD_n$ with $\lambda(\pi) = \lambda$ and $\lambda^{c}=\lambda(\pi)^{c}=(n-\lambda_{n},n-\lambda_{n-1},...,n-\lambda_{1})$ denotes the complimentary partition of $\lambda$ inside $(n^n)$. Then \begin{equation}\label{eq:enrook}
		\sum_{\mu\vdash n}q^{n-\ell(\mu)}b_{\pi,\mu}(q)=\prod_{j=1}^{n}[n-\lambda_{j}-j+1]_{q}=R_{n}(\lambda^{c};q).
	\end{equation}
\end{proposition}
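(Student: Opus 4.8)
The plan is to prove the two equalities in \eqref{eq:enrook} separately. The rightmost one, $\prod_{j=1}^{n}[n-\lambda_{j}-j+1]_{q}=R_{n}(\lambda^{c};q)$, is immediate from \eqref{eq:Rlastprod}: since $\lambda=\lambda(\pi)$ is contained in the staircase $\rho_n$ we have $\lambda_1\le n-1$, so $\lambda^c_n=n-\lambda_1\ge 1$ and $\ell(\lambda^c)=n$; as $\lambda^c_{n-i+1}=n-\lambda_i$, \eqref{eq:Rlastprod} applied to $\lambda^c$ gives $R_n(\lambda^c;q)=\prod_{i=1}^n[\lambda^c_{n-i+1}-i+1]_q=\prod_{i=1}^n[n-\lambda_i-i+1]_q$. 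For the first equality I would use one algebra specialization. Let $\varphi\colon\Sym\to\QQ(q)$ be the $\QQ(q)$-algebra homomorphism with $\varphi(e_k)=q^{k-1}/(q-1)^{k-1}$ for $k\ge 1$. Then $\varphi(e_\mu)=\prod_i\varphi(e_{\mu_i})=q^{\,n-\ell(\mu)}/(q-1)^{\,n-\ell(\mu)}$, so applying $\varphi$ to the $e$-expansion $\chi_\pi(q)=\sum_{\mu\vdash n}(q-1)^{n-\ell(\mu)}b_{\pi,\mu}(q)\,e_\mu$ yields $\varphi(\chi_\pi(q))=\sum_{\mu\vdash n}q^{\,n-\ell(\mu)}b_{\pi,\mu}(q)$. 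It therefore suffices to prove $\varphi(\chi_\pi(q))=\prod_{j=1}^n[n-\lambda_j-j+1]_q$.

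The next step is to evaluate $\varphi(\chi_\pi(q))$ via the chromatic quasisymmetric function. Since $\sum_{k\ge 0}\varphi(e_k)z^k=\dfrac{1-z/(q-1)}{1-qz/(q-1)}$, which is the $E$-series of the super-alphabet with single positive letter $\tfrac{-1}{q-1}$ and single negative letter $\tfrac{-q}{q-1}$, the map $\varphi$ is plethystic evaluation at that alphabet. Feeding it into \eqref{eq:chromaticLLTplethysmrel} and dividing by $q-1$, the $k$-th power sum becomes $-\bigl(\tfrac1{1-q}\bigr)^k$, so the resulting alphabet is $-\{\tfrac1{1-q}\}$, giving $\varphi(\chi_\pi(q))=(q-1)^nX_\pi(q)\bigl[-\{\tfrac1{1-q}\}\bigr]$. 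Using that for $f$ homogeneous of degree $n$ one has $f[-\{c\}]=(-1)^n(\omega f)[\{c\}]$ and $(\omega f)[\{c\}]=\langle\omega f,h_n\rangle\,c^n$, all scalars cancel and $\varphi(\chi_\pi(q))=\langle\omega X_\pi(q),h_n\rangle=\langle X_\pi(q),e_n\rangle$. (Equivalently, since $\langle e_\mu,e_n\rangle=1$ for every $\mu\vdash n$, this is the sum of all coefficients in the $e$-expansion of $X_\pi(q)$.)

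It remains to show $\langle X_\pi(q),e_n\rangle=\prod_{j=1}^n[n-\lambda_j-j+1]_q$. By the $q$-refinement of Stanley's acyclic-orientation interpretation of $\langle\omega X_G(q),h_n\rangle$ (equivalently, Gasharov's $P$-tableau formula applied to $[s_{(1^n)}]X_\pi(q)$), $\langle X_\pi(q),e_n\rangle=\sum_{\theta}q^{\mathrm{des}(\theta)}$, summed over acyclic orientations $\theta$ of the graph $G_\pi$ on $[n]$ with edges $\{i,j\}$ for $i<j$ and $(i,j)\in\Area(\pi)$, where $\mathrm{des}(\theta)$ counts edges directed towards their smaller endpoint. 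For each vertex $j$ its set of larger neighbours $\{i>j:(j,i)\in\Area(\pi)\}=\{i: j<i\le n-\lambda'_j\}$ is a clique of $G_\pi$ (because $\lambda'$ is weakly decreasing), of size $r_j=n-\lambda'_j-j$. Building an acyclic orientation by inserting the vertices $n,n-1,\dots,1$ in turn, at step $j$ the already-oriented clique on these $r_j$ larger neighbours is a transitive tournament, and the acyclic ways to orient the $r_j$ edges out of $j$ are exactly the $r_j+1$ ``threshold'' choices; recording the number of backward edges so created gives a local factor $\sum_{t=0}^{r_j}q^t=[r_j+1]_q$. Hence $\langle X_\pi(q),e_n\rangle=\prod_{j=1}^n[r_j+1]_q=\prod_{j=1}^n[n-\lambda'_j-j+1]_q$. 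Finally, complementation in $(n^n)$ commutes with conjugation and $R_n$ is conjugation-invariant, so $\prod_j[n-\lambda'_j-j+1]_q=R_n((\lambda^c)';q)=R_n(\lambda^c;q)=\prod_j[n-\lambda_j-j+1]_q$, finishing the proof.

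I expect the last step to be the main obstacle: getting the acyclic-orientation (or $P$-tableau) description of $\langle X_\pi(q),e_n\rangle$ in exactly the form needed, and then verifying carefully that the threshold choices at successive vertices are independent so that the enumerator factors as the stated product. The plethystic bookkeeping in the middle step is routine but must be done with care about the convention for $X/(q-1)$.
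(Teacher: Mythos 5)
Your proposal is correct, but it takes a genuinely different route from the paper. The paper handles the first equality in \eqref{eq:enrook} with the Abreu--Nigro machinery: it identifies $\sum_\mu q^{n-\ell(\mu)}b_{\pi,\mu}(q)$ with $q^n\mathrm{IF}(\pi)[1]$, checks that both sides of the identity are multiplicative and satisfy the modular law, verifies agreement on the paths $N^nE^n$ via $q$-Stirling numbers of the first kind, and then invokes the Abreu--Nigro uniqueness theorem for modular multiplicative functions. You instead evaluate the $e$-expansion under the algebra homomorphism $\varphi(e_k)=q^{k-1}/(q-1)^{k-1}$, push this through \eqref{eq:chromaticLLTplethysmrel} to get $\varphi(\chi_\pi(q))=\langle X_\pi(q),e_n\rangle$ (your plethystic bookkeeping is right, with the standard convention $p_k[X/(q-1)]=p_k[X]/(q^k-1)$), and then compute $\langle X_\pi(q),e_n\rangle$ as $\sum_{\theta}q^{\mathrm{des}(\theta)}$ over acyclic orientations of $G_\pi$, which your vertex-insertion argument correctly factors as $\prod_j[n-\lambda'_j-j+1]_q$; the passage from $\lambda'$ to $\lambda$ via conjugation-invariance of $R_n$ and $(\lambda^c)'=(\lambda')^c$, and the reduction of the second equality to \eqref{eq:Rlastprod} (noting $\ell(\lambda^c)=n$ since $\lambda_1\le n-1$), are also fine. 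The one step you correctly flag as the load-bearing external input is the identity $\langle X_\pi(q),e_n\rangle=\sum_{\theta}q^{\mathrm{asc}(\theta)}$; this is not quite Gasharov as stated (the column $P$-tableau count needs an additional statistic-preserving bijection to acyclic orientations), but it is exactly the Shareshian--Wachs sink refinement of Stanley's theorem (proved by Athanasiadis via the power-sum expansion of $\omega X_G(x,t)$), summed over all numbers of sinks, so with a precise citation your argument closes. In comparison, the paper's route stays entirely inside the Abreu--Nigro framework already used for the left-hand side (so no chromatic reciprocity input is needed) and is essentially a recursion/uniqueness check, while your route buys a direct combinatorial interpretation of $\sum_\mu q^{n-\ell(\mu)}b_{\pi,\mu}(q)$ as a descent-weighted count of acyclic orientations, at the cost of importing a deeper theorem from the chromatic quasisymmetric function literature.
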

\eqref{eq:Rlastprod} is the second equality above.
To show the first equality, we prove that both sides of \eqref{eq:enrook} are multiplicative and satisfy the modular laws of Abreu and Nigro from \cite{Abreu_Nigro_csfmodular}, \cite{Abreu_Nigro_forests}. \cite[Theorem 1.2]{Abreu_Nigro_csfmodular} says that such functions are completely determined by their values on the paths $N^nE^n$ for $n \in \ZZ_{\geq 0}$, and we show that two sides of the first equality in \eqref{eq:enrook} are equal for these paths.

For $\pi,\eta \in \DD$, let $\pi \cdot \eta$ denote the concatenation of two Dyck paths. For an algebra $A$, a function $f: \DD \to A$ is multiplicative if $f(\pi \cdot \eta) = f(\pi) \cdot f(\eta)$ for any $\pi,\eta \in \DD$.

Taking the generators $y_n = q^{-1}p_n$ for $n \in \ZZ_{>0}$ of the ring of symmetric functions in \cite[Definition 3.1]{Abreu_Nigro_forests} we get $$ \mathrm{IF}(\pi) = \sum_{\mu \vdash n} b_{\pi,\mu}(q) q^{-\ell(\mu)}p_\mu \qquad \hbox{ for } \pi \in \DD_n.$$ Taking specialization at $(1,0,\ldots)$, and multiplying by $q^{|\pi|}$, \begin{equation}
	q^n\mathrm{IF}(\pi)[1] = \sum_{\mu \vdash n}  q^{n-\ell(\mu)} b_{\pi,\mu}(q) \qquad \hbox{ for } \pi \in \DD_n, 
\end{equation} which is the left hand side of \eqref{eq:enrook}.
By \cite[Proposition 3.3, 3.4]{Abreu_Nigro_forests} $\pi \mapsto \mathrm{IF}(\pi)$ is multiplicative and satisfy the modular laws. Then so is $\pi \mapsto q^n\mathrm{IF}(\pi)[1]$. 

\autoref{prop:R_nsquare} says that the two sides of the first equality in \eqref{eq:enrook} agree on paths $N^nE^n$ for $n \in \ZZ_{>0}$ and \autoref{prop:partoprodmodular} and the above discussion says that both of them are multiplicative and satisfy modular law. Hence their equality is proved by \cite[Theorem 1.2]{Abreu_Nigro_csfmodular}.

We now provide the details concerning the product side of \eqref{eq:enrook}.

A similar result has been proved in \cite[Proposition 3.8]{Abreu_Nigro_forests}, which says for $\pi \in \DD_n$ and $\lambda = \lambda(\pi)$, $$ \sum_{\mu \vdash n}^{} b_{\pi,\mu}(q) = \prod_{i=1}^{n} (1+[n-\lambda_j-j]_q).$$ 

\subsection{$q$-Stirling numbers of first kind}

The $q$-Stirling numbers of first kind $s_{q}(n,k)$ are defined by \cite[page 4]{Abreu_Nigro_forests}
\begin{equation*}\label{eq:qStirling1def}
	x(x-[1]_{q})...(x-[n-1]_{q})=\sum_{k=1}^{n}(-1)^{n-k}s_{q}(n,k)x^{k}.
\end{equation*}
Putting $x=-z^{-1}$ and multiplying by $(-z)^n$, we get \begin{equation}\label{eq:qStirling1genfn}
		\sum_{k=1}^{n} s_q(n,k)z^{n-k} = \prod_{i=0}^{n-1}(1+[i]_qz).
	\end{equation}
	In other words, $$s_q(n,k) = e_{n-k}([0]_q,\ldots,[n-1]_q).$$

\subsection{Equality for $N^nE^n$}
Now we prove \eqref{eq:enrook}  when $\pi = N^nE^n$.
\begin{lemma}\label{prop:R_nsquare} For $n\in \ZZ_{>0}$,
	\begin{equation}\label{eq:Rnnn}
		\sum_{\lambda\vdash n}q^{n-\ell(\lambda)}b_{N^{n}E^{n},\lambda} = [n]_{q}! = R_n((n^n);q)
	\end{equation}
\end{lemma}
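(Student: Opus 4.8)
The second equality, $[n]_q! = R_n((n^n);q)$, is immediate from the product formula \eqref{eq:Rlastprod} for the $\ell(\lambda)$-th $q$-rook number: taking $\lambda = (n^n)$ we have $\ell(\lambda) = n$ and $R_n((n^n);q) = \prod_{i=1}^{n}[n-i+1]_q = [n]_q!$ (equivalently, invoke \eqref{eq:rectangleqrook} with $a = b = i = n$). The real content is therefore the first equality, which, by the discussion preceding the lemma, is the assertion $\sum_{\mu\vdash n}q^{n-\ell(\mu)}b_{N^nE^n,\mu}(q) = [n]_q!$.

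My plan is to realize, for an arbitrary $\pi\in\DD_n$, the number $\sum_{\mu\vdash n}q^{n-\ell(\mu)}b_{\pi,\mu}(q)$ as a single algebra specialization of $\chi_\pi(q)$, and then to evaluate it on $\pi = N^nE^n$, where the chromatic quasisymmetric function is that of the complete graph and hence transparent. Let $\psi$ be the $\QQ(q)$-algebra homomorphism from the ring of symmetric functions over $\QQ(q)$ to $\QQ(q)$ determined by $\psi(e_k) = \bigl(\tfrac{q}{q-1}\bigr)^{k-1}$ for all $k\ge1$; a one-line generating-function computation rewrites this as $\psi(p_m) = \tfrac{(-1)^{m-1}(q^m-1)}{(q-1)^m}$. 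Since $n-\ell(\mu)=\sum_i(\mu_i-1)$, we get $\psi(e_\mu) = \bigl(\tfrac{q}{q-1}\bigr)^{n-\ell(\mu)}$, so applying $\psi$ to the defining $e$-expansion $\chi_\pi(q)=\sum_{\mu\vdash n}(q-1)^{n-\ell(\mu)}b_{\pi,\mu}(q)\,e_\mu$ yields $\psi(\chi_\pi(q)) = \sum_{\mu\vdash n}q^{n-\ell(\mu)}b_{\pi,\mu}(q)$. (This is the same quantity as $q^n\,\mathrm{IF}(\pi)[1]$ recorded above, just re-encoded.)

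It then remains to compute $\psi(\chi_{N^nE^n}(q))$, which I would do in two steps. First, $X_{N^nE^n}(q) = [n]_q!\,e_n$: since $\Area(N^nE^n)=\{(i,j):1\le i<j\le n\}$, only injective words contribute to $X_{N^nE^n}(q)$, and grouping such words by their underlying $n$-element set $S$ gives $\sum_{w:\,\{w_1,\dots,w_n\}=S}q^{\inv(N^nE^n,w)} = \sum_{\sigma\in S_n}q^{\inv(\sigma)} = [n]_q!$, so $X_{N^nE^n}(q)[X] = [n]_q!\,e_n[X]$; then \eqref{eq:chromaticLLTplethysmrel} gives $\chi_{N^nE^n}(q)[X] = (q-1)^n[n]_q!\,e_n\!\bigl[\tfrac{X}{q-1}\bigr]$. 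Second, expand $e_n = \sum_{\mu\vdash n}\tfrac{(-1)^{n-\ell(\mu)}}{z_\mu}p_\mu$ and use $p_\mu\bigl[\tfrac{X}{q-1}\bigr]=\prod_i\tfrac{p_{\mu_i}[X]}{q^{\mu_i}-1}$ together with the formula for $\psi(p_{\mu_i})$: every factor $q^{\mu_i}-1$ cancels, the sign becomes $(-1)^{2(n-\ell(\mu))}=1$, and $\psi\bigl(e_n[\tfrac{X}{q-1}]\bigr) = \tfrac{1}{(q-1)^n}\sum_{\mu\vdash n}\tfrac{1}{z_\mu} = \tfrac{1}{(q-1)^n}$ (using $\sum_{\mu\vdash n}z_\mu^{-1}=1$). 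Hence $\psi(\chi_{N^nE^n}(q)) = (q-1)^n[n]_q!\cdot\tfrac{1}{(q-1)^n} = [n]_q!$, which finishes the proof.

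I expect no single step to be genuinely difficult; the only real subtlety is pinning down the specialization $\psi$ (the exponent $(q-1)^{n-\ell(\mu)}$ in the $e$-expansion is exactly what forces $\psi(e_k)=(q/(q-1))^{k-1}$) and keeping the plethystic substitution $X\mapsto X/(q-1)$ straight while applying $\psi$. As an alternative, one could avoid plethysm altogether and compute the $e$-expansion of $W_{(n)}(q)=\chi_{N^nE^n}(q)$ directly from its monomial expansion $\sum_\mu{n\brack\mu}_q m_\mu$, but routing through $X_{N^nE^n}(q)=[n]_q!\,e_n$ is shorter and cleaner.
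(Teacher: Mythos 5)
Your proof is correct, but it takes a genuinely different route from the paper's for the first equality (the second equality is handled identically, via \eqref{eq:Rlastprod}). The paper's proof is two lines: it quotes \cite[Corollary 3.7]{Abreu_Nigro_forests}, which says $\sum_{\ell(\lambda)=k} b_{N^nE^n,\lambda} = s_q(n,k)$, and then evaluates the generating function \eqref{eq:qStirling1genfn} at $z=q$ to get $\sum_k q^{n-k}s_q(n,k) = \prod_{i=0}^{n-1}(1+q[i]_q) = [n]_q!$. You instead bypass the $q$-Stirling numbers of the first kind entirely: you package the quantity $\sum_\mu q^{n-\ell(\mu)}b_{\pi,\mu}(q)$ as the algebra specialization $\psi$ with $\psi(e_k)=(q/(q-1))^{k-1}$ (equivalent, as you note, to the $q^n\,\mathrm{IF}(\pi)[1]$ specialization the paper sets up for the surrounding modular-law argument), and then evaluate it on $\pi=N^nE^n$ directly from $X_{N^nE^n}(q)=[n]_q!\,e_n$ together with the plethystic relation \eqref{eq:chromaticLLTplethysmrel} and the power-sum expansion of $e_n$. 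I checked the details: $\psi(p_m)=(-1)^{m-1}(q^m-1)/(q-1)^m$ is what the generating-function computation gives, the cancellation of the factors $q^{\mu_i}-1$ and of the signs is right, and $\sum_{\mu\vdash n}z_\mu^{-1}=1$ closes it out. What your approach buys is self-containment --- it needs only the definition of $b_{\pi,\mu}$ and facts already stated in the paper, not the finer result of Abreu--Nigro identifying the length-graded coefficient sums with $s_q(n,k)$; what it costs is some plethystic bookkeeping that the paper's citation-based argument avoids. Either proof slots equally well into the modular-law framework used to establish \autoref{prop:enrook}.
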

\begin{proof}
	By \eqref{eq:Rlastprod}, $$ R_n((n^n);q) = [n]_q!.$$
	\cite[Corollary 3.7]{Abreu_Nigro_forests} says that 
	$$\sum_{\substack{\lambda\vdash n\\ \ell(\lambda)=k}}b_{N^{n}E^{n},\lambda}=s_{q}(n,k).$$
	Then by \eqref{eq:qStirling1genfn},
	$$\sum_{\lambda\vdash n}q^{n-\ell(\lambda)}b_{N^{n}E^{n},\lambda}=\sum_{k=1}^{n}q^{n-k}s_{q}(n,k)=[n]_{q}!.$$ 
\end{proof}

\subsection{Multiplicative and modular}
\begin{lemma}\label{prop:partoprodmodular}
	Define $G: \DD \to \ZZ[q]$ by $$ G(\pi) =  \prod_{j=1}^{n}[n-\lambda(\pi)_{j}-j+1]_{q} \hbox{ for } \pi \in \DD_n.$$ Then $G$ is multiplicative and satisfies the modular law.
\end{lemma}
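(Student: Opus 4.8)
The statement asks that $G(\pi) = \prod_{j=1}^{n}[n-\lambda(\pi)_j - j + 1]_q$ be multiplicative and satisfy the modular law. I would treat the two properties separately, and I expect the modular law to be the real work.

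For multiplicativity, recall that if $\pi \in \DD_a$ and $\eta \in \DD_b$ then $\pi\cdot\eta \in \DD_{a+b}$ has $\lambda(\pi\cdot\eta)$ equal to the partition obtained by placing $\lambda(\eta)$ (shifted to have $b$ columns) above $\lambda(\pi)$; concretely $\lambda(\pi\cdot\eta)_i = \lambda(\eta)_i + b$ for $1\le i\le \ell(\eta)$ (padding $\lambda(\eta)$ with zeros to length $b$) and $\lambda(\pi\cdot\eta)_{b+i} = \lambda(\pi)_i$ for $i\ge 1$. Actually the cleanest bookkeeping uses $\Area$: since $\Area(\pi\cdot\eta)=\Area(\pi)\sqcup(\text{shift of }\Area(\eta))$ and the shift sends row $i$ of the $b\times b$ grid to row $b+i$ of the $(a+b)\times(a+b)$ grid, one gets for $1\le j\le b$ that $(a+b) - \lambda(\pi\cdot\eta)_j - j + 1 = b - \lambda(\eta)_j - j + 1$ (the extra $a$ columns in each such row are all below the path, so they cancel the $+a$), and for $j = b + i$ that $(a+b) - \lambda(\pi\cdot\eta)_{b+i} - (b+i) + 1 = a - \lambda(\pi)_i - i + 1$. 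Hence $G(\pi\cdot\eta) = \prod_{i=1}^{a}[a-\lambda(\pi)_i-i+1]_q \cdot \prod_{j=1}^{b}[b-\lambda(\eta)_j-j+1]_q = G(\pi)G(\eta)$. I would just spell out this index shift carefully; it is routine but needs the convention for $\lambda(\pi\cdot\eta)$ stated precisely.

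For the modular law, I first need to recall its precise form from \cite{Abreu_Nigro_csfmodular}: it relates the values of $f$ on three Dyck paths $\pi_0, \pi_1, \pi_2$ that agree everywhere except in a small window, where they differ by the presence/absence of one or two cells near the diagonal (the three local configurations), with a relation of the shape $(1+q)f(\pi_1) = f(\pi_0) + q\, f(\pi_2)$ (up to the exact normalization in that paper). The key point is that passing between $\pi_0,\pi_1,\pi_2$ changes $\lambda(\pi)$ in exactly one or two entries — say it changes $\lambda_j$ by adjusting the $j$-th row length near the corner — so that most factors in the product $\prod_j [n-\lambda_j-j+1]_q$ are common to all three paths and factor out. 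After cancelling the common factors, the modular law reduces to an identity among $q$-integers of the form $(1+q)[m]_q[m']_q \overset{?}{=} [m+1]_q[m']_q \cdot(\text{something}) + q[m-1]_q[m'+1]_q\cdot(\text{something})$, or more likely the cleaner two-term identity $[a+1]_q + q[a]_q = [a]_q + q[a+1]_q$-type relations combined with $[a+1]_q = 1 + q[a]_q$.

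**Main obstacle.** The delicate part is matching conventions: figuring out exactly which rows $j$ of $\lambda(\pi)$ change as one moves among $\pi_0,\pi_1,\pi_2$ in the modular triple, and with what multiplicities, so that the surviving factors after cancellation assemble into a true polynomial identity in $q$. This is purely a careful-bookkeeping obstacle — I would set up coordinates for the window where the three paths differ (two consecutive rows $j$ and $j+1$ near the diagonal), write $n-\lambda_j - j + 1 = \mathbf{m}(\pi)(j) - j + 1$ using the Hessenberg function description from \S\ref{sec:DycktoHess}, note that $\mathbf{m}(\pi_i)(j)$ takes three consecutive values (or $\mathbf{m}(\pi_i)(j+1)$ does) as $i$ ranges over $\{0,1,2\}$, and then verify the resulting three-term $q$-integer identity by hand. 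Alternatively — and this might be cleaner — I would invoke a known criterion (e.g. from \cite{Abreu_Nigro_csfmodular} itself, or the ``local'' description of the modular law in terms of the Hessenberg function) that reduces checking the modular law to a single small computation, and do that computation. Once that is in place, combined with Lemma \ref{prop:R_nsquare} and the cited uniqueness theorem \cite[Theorem 1.2]{Abreu_Nigro_csfmodular}, the proof of Proposition \ref{prop:enrook} is complete.
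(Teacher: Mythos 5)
Your plan follows the paper's proof essentially verbatim: multiplicativity via the index shift describing $\lambda(\pi\cdot\eta)$ (note the shift of $\lambda(\eta)$ is by $|\pi|$, not by $|\eta|$ as you wrote, though your subsequent cancellation uses the correct one), and the modular law by translating the three paths into partitions differing in one or two rows, cancelling the common factors of $G$, and checking a $q$-integer identity. The one point your sketch leaves genuinely open is the identity in the two-row case: there the defining condition $\lambda^{(1)}_r - \lambda^{(1)}_{r+1} = 1$ forces the two affected factors of $G(\pi^{(1)})$ to coincide, say both equal to $[a]_q$, so the required relation $(1+q)G(\pi^{(1)}) = qG(\pi^{(0)}) + G(\pi^{(2)})$ reduces after cancellation to $(1+q)[a]_q[a]_q = q[a]_q[a-1]_q + [a]_q[a+1]_q$, i.e.\ to the same one-variable identity $(1+q)[a]_q = q[a-1]_q + [a+1]_q$ as in the single-row case, rather than to the two-parameter identity you guessed.
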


\begin{proof}
	For showing that $G$ is multiplicative, let $\pi \in \DD_n,\eta \in \DD_m$ and $\pi\cdot\eta$ denotes the concatenation of those two Dyck paths. Then $$ \lambda(\pi \cdot \eta)_j = \begin{cases}
		\lambda(\pi)_j &\hbox{ for } 1 \leq j \leq n,
		\\
		\lambda(\eta)_{j-n}+n &\hbox{ for } n+1 \leq j \leq m.
	\end{cases}$$ 
	Then
	\begin{align*}
		&G(\pi)\cdot G(\eta)=
		\prod_{i=1}^{n}[n-\lambda(\pi)_{i}-i+1]_{q}.\prod_{j=1}^{m}[m-\lambda(\eta)_{j}-j+1]_{q}\\
		=&\prod_{i=1}^{n}[n+m-\lambda(\pi)_{i}-(i+m)+1]_{q}.\prod_{j=1}^{m}[m+n-(\lambda(\eta)_{j}+n)-j+1]_{q}\\
		=&\prod_{j=1}^{m+n}[n+m-\lambda(\pi\cdot \eta)_{j}-j+1]_{q} = G(\pi \cdot \eta).
	\end{align*}
	Hence, $G$ is multiplicative. 
	
	Let $\pi^0, \pi^1. \pi^2$ be Dyck paths satifying conditions $(1)$ or $(2)$ of modular law of \cite[Definition 2.1]{Abreu_Nigro_csfmodular} with $\lambda(\pi^{(i)}) = \lambda^{(i)}$ for $i \in \{0,1,2\}$. Using the transformation given by \S\ref{sec:DycktoHess}, one of the following is true \begin{enumerate}
		\item[(1)] $\lambda^{(0)} = \lambda^{(1)} + \varepsilon_s, \hbox{ and } \lambda^{(2)} = \lambda^{(1)}-\varepsilon_s,$ for some $s\in \ZZ_{>0}$,
		\item[or,] 
		\item[(2)] $
		\lambda^{(2)} = \lambda^{(1)} - \varepsilon_r,\, \lambda^{(0)} = \lambda^{(1)} + \varepsilon_{r+1},\, \hbox{ and } \lambda^{(1)}_r - \lambda^{(1)}_{r+1} = 1$ for some $r \in \ZZ_{>0}$,
	\end{enumerate} 
To show that modular law holds we need to show that for any three Dyck paths $\pi^0,\pi^1,\pi^2$ with associated partitions $\lambda^{(0)},\lambda^{(1)},\lambda^{(2)}$ satisfying $(1)$ or $(2)$ as above, 
$$(1+q)G(\pi^{(1)})=qG(\pi^{(0)})+G(\pi^{(2)}).$$ 

Both of these cases are proved by using \begin{equation}\label{eq:modularbasic}
	(1+q)[a]_{q} = q[a-1]_{q} + [a+1]_{q}, \qquad \hbox{ for } a \in \ZZ_{\geq 0}.
\end{equation} $$ 	$$

Assume that condition $(1)$ is true. In this case, since the difference is only on one component $s$, taking $$a = n - \lambda^{(1)}_s - s +1$$ gives $$ n - \lambda^{(0)}_{s} - s +1 = a-1, \hbox{ and } n - \lambda^{(2)}_{s} - s +1 = a+1,$$ so using \eqref{eq:modularbasic} shows that modular law holds true in this case.

Next, assume that condition $(2)$ is true. Let $$a = n-\lambda^{(1)}_{r}-r+1 = n - \lambda^{(1)}_{r+1} - (r+1)+1 \hbox{ and } b = \prod_{j\neq r,r+1}^{}[n-\lambda^{(1)}_{j}-j+1]_{q}.$$ Then using \begin{align*}
	&n-\lambda^{(2)}_r-r+1 = n-(\lambda^{(1)}_r -1)-r+1 = a+1,
	\\
	& n-\lambda^{(2)}_{r+1}-(r+1)+1 = n-\lambda^{(1)}_{r+1}-r = n - \lambda^{(1)}_r - r +1 = a,
	\\
	& n-\lambda^{(0)}_{r}-r+1 = n-\lambda^{(1)}_{r}-r+1 = a,
	\\
	&n-\lambda^{(0)}_{r+1}-(r+1)+1 = n-(\lambda^{(1)}_{r+1}+1)-(r+1)+1 = a-1,
\end{align*} we get,
\begin{align*}
	&qG(\pi^{(0)})+G(\pi^{(2)}) = b \cdot ([a+1]_{q}[a]_{q}+q[a]_{q}[a-1]_{q}) =b\cdot (1+q)[a]_{q}[a]_{q}
	\\
	&=\prod_{j\neq r,r+1}^{}[n-\lambda^{(1)}_{j}-j+1]_{q}\cdot(1+q)[n-\lambda^{(1)}_{r}-r+1]_{q}[n-\lambda^{(1)}_{r+1}-(r+1)+1]_{q}
	\\
	&= (1+q) G(\pi^{(1)}).
\end{align*}
Hence, $G$ satisfies the modular law.

\end{proof}

\section{Further Comments}

\subsection{A formula for the $q$-Stirling numbers}

Recall from \eqref{eq:qStirling2} that the $q$-Stirling numbers of second kind are $S_q(n,k) = R_{n-k}(\rho_n;q)$, where $\rho_n = (n-1,\ldots,0)$ is the staircase partition. \autoref{th:rooktableauformula} in this case becomes a sum over the usual standard Young tableaux.

\begin{proposition}
	Let $n,k \in \ZZ_{>0}$, the $q$-Stirling numbers of second kind has the formula $$S_q(n,k) = \sum_{\substack{\mu \vdash n \\ \mu_1 = k}} q^{n(\mu')} \sum_{T \in \SYT_\mu} \prod_{\substack{b \in \mu \\ \coleg(b)>0}} [\arm_{<T(b)}(\up(b)+1)]_q.$$ 
\end{proposition}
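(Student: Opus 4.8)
The plan is to obtain the proposition as the special case of \autoref{th:rooktableauformula} attached to the staircase partition. Recall from \eqref{eq:qStirling2} that $S_q(n,k) = R_{n-k}(\rho_n;q)$ with $\rho_n = (n-1,n-2,\ldots,1,0)$, so it suffices to compute the right-hand side of \eqref{eq:R_k=sumwt} when $\lambda = \rho_n$ and the rook index is $n-k$. The staircase $\rho_n$ has $\binom{n}{2}$ cells, which is the total number of cells strictly above the diagonal in the $n\times n$ grid; hence $\rho_n$ is the shape above the Dyck path $\pi = (NE)^n \in \DD_n$, the unique path with $\Area(\pi) = \emptyset$ (example (3) in \S4.2). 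So I would apply \autoref{th:rooktableauformula} with this $\pi$.

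The key point is that for $\pi = (NE)^n$ every $\pi$-dependent ingredient trivializes. Since $\Area(\pi) = \emptyset$, for all $1 \le i < j \le n$ the cell $(i,j)$ lies above $\pi$, so $i <_\pi j$; that is, $<_\pi$ is the usual total order on $[n]$. Three consequences follow: (i) the defining condition $T(\up(b)) <_\pi T(b)$ of $\SYT^\pi_\mu$ holds automatically for any standard Young tableau, so $\SYT^\pi_\mu = \SYT_\mu$ for every $\mu \vdash n$; (ii) $\gamma(T) = 0$ for every $T$, since there is no pair of boxes $(b,c)$ with $(T(c),T(b)) \in \Area(\pi) = \emptyset$; (iii) $\arm_{<_\pi j}(b)$ reduces to $\arm_{<j}(b)$, the number of boxes strictly to the right of $b$ in its row with entry $< j$. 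Together with $\#\Area(\pi) = 0$, the weight from \eqref{eq:wtdef} simplifies to $\wt(T;q) = q^{n(\mu')}\prod_{b \in \mu,\ \coleg(b)>0}[\arm_{<T(b)}(\up(b))+1]_q$.

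Substituting into \eqref{eq:R_k=sumwt} with rook index $n-k$ then gives
\[
	S_q(n,k) = R_{n-k}(\rho_n;q) = \sum_{\substack{\mu \vdash n \\ \mu_1 = n-(n-k)}} \sum_{T \in \SYT_\mu} q^{n(\mu')}\prod_{\substack{b \in \mu \\ \coleg(b)>0}}[\arm_{<T(b)}(\up(b))+1]_q,
\]
and since $n-(n-k) = k$ this is precisely the claimed identity.

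I expect no genuine obstacle: the result is a direct corollary of the main theorem, and the only thing to handle with care is the index bookkeeping, namely that it is $R_{n-k}$ (not $R_k$) that equals $S_q(n,k)$, which is exactly why the outer sum is constrained by $\mu_1 = k$. As a minor sanity check one can note that this sum ranges over \emph{all} $\mu \vdash n$ with $\mu_1 = k$ with no further restriction: by \autoref{prop:piSYTsupport} one would need $\mu \ge P((NE)^n)$ in dominance order, but the Greene shape of a total order on $[n]$ is $P((NE)^n) = (1^n)$, the minimum in dominance order, so this imposes nothing.
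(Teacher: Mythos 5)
Your proposal is correct and follows essentially the same route as the paper: specialize \autoref{th:rooktableauformula} to the path $\pi=(NE)^n$ with $\lambda(\pi)=\rho_n$, note that $\Area(\pi)=\emptyset$ makes $<_\pi$ the usual order so that $\SYT^\pi_\mu=\SYT_\mu$, $\gamma(T)=0$, and the weight collapses to $q^{n(\mu')}\prod_b[\arm_{<T(b)}(\up(b))+1]_q$, then use $S_q(n,k)=R_{n-k}(\rho_n;q)$. The only additions beyond the paper's argument are harmless bookkeeping remarks (the Greene-shape sanity check), and you correctly place the $+1$ outside $\arm_{<T(b)}(\up(b))$, matching the paper's proof rather than the typo in the proposition statement.
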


\begin{proof}
	For $n \in \ZZ_{>0}$, let $\lambda = \rho_n = (n-1,n-2,\ldots,1,0)$. Let $\pi = \pi(\lambda) \in \DD_n$. Then $\# \Area(\pi) = 0$ and $i,j \in [n]$, $i <_\pi j$ if and only if $i<j$. So for $\mu \vdash n$, $\SYT^\pi_\mu = \SYT_\mu$ and for $T \in \SYT_\mu$ and $b \in \mu$, $\gamma(T,b) = 0$. Then $$ \wt(T;q) =   q^{n(\mu')} 
	\prod_{\substack{b \in \mu \\ \coleg(b)>0}} [\arm_{< T(b)}(\up(b))+1]_q,$$ so \autoref{th:rooktableauformula} gives the result.
	
\end{proof}

\subsection{Matrix counting over $\mathbb{F}_q$}

Let $\pi \in \DD_n$ and $\lambda = \lambda(\pi)$, $P_k(\pi;q)$ be the number of $n \times n$ matrices over $\mathbb{F}_q$ of rank $k$ such that all non-zero entries appear above $\pi$. Then by \cite[Theorem 1]{Haglund_qrookfinitefield} $$ P_k(\pi;q) = (q-1)^k q^{|\lambda|-k} R_k(\lambda;q^{-1}) = (1-q^{-1})^k q^{|\lambda|} R_k(\lambda;q^{-1}). $$ Then using \autoref{prop:Rkwithcpimu}, \begin{equation}\label{eq:numHessmat}
	P_k(\pi;q) = q^{|\lambda|}(1-q^{-1})^k \sum_{\substack{\mu \vdash n \\ \mu_1 = n-k}} \widetilde{c}_{\pi,\mu}(q) = q^{|\lambda|}\sum\limits_{\substack{\mu \vdash n \\ \mu_1 = n-k}} [\widetilde{W}_\mu(q)] \widetilde{\chi}_\pi(q),
\end{equation} where $[\widetilde{W}_\mu(q)] \widetilde{\chi}_\pi(q)$ denotes the coefficient of $\widetilde{W}_\mu(q)$ in the $\widetilde{W}$-expansion of $\widetilde{\chi}_\pi(q)$. 

Since $\sum_{k \geq 0} P_k(\pi;q)$ is the total number of matrices such that all non-zero entries lie above $\pi$, which is simply $q^{|\lambda|}$, we get $$ \sum_{\mu \vdash n} [\widetilde{W}_\mu(q)] \widetilde{\chi}_\pi(q) = 1.$$

\bibliographystyle{alpha}
\bibliography{pathsym}

\end{document}